\newtheorem{thm}{Theorem}[section]
\newtheorem{lem}[thm]{Lemma}
\newtheorem{prop}[thm]{Proposition}
\newtheorem{cor}[thm]{Corollary}
\newtheorem{assu-nota}[thm]{Assumption--Notation}
\theoremstyle{definition}
\newtheorem{ex}[thm]{Example}
\newtheorem{remark}[thm]{Remark}
\newtheorem{defn}[thm]{Definition}
\newcommand{\C}{\mathbb C}
\newcommand{\pp}{\mathbb P}
\DeclareMathOperator{\Pic}{Pic}
\newcommand{\OO}{\mathcal{O}}
\numberwithin{equation}{section}
\title{Numerical properties of exceptional divisors of birational morphisms of smooth surfaces}
\author{Vicente Lorenzo \textsuperscript{1}} 
\author{ Margarida Mendes Lopes \textsuperscript{2}} 
\author{ Rita Pardini \textsuperscript{3}}
\begin{document}
 \begin{abstract}  
We make a very detailed analysis of effective divisors whose support is contained in the exceptional locus of a birational morphism of smooth projective surfaces. As an application we extend  Miyaoka's inequality on the number of canonical singularities on a  projective normal surface with non-negative Kodaira dimension to the non minimal case, obtaining a slightly better result than known extensions by Megyesi and Langer.

\medskip
\noindent{\em 2020 Mathematics Subject Classification:} 14J17, 14E05, 14J99. 

\par
\medskip
\noindent{\em Keywords:} effective divisors on projective surfaces, exceptional divisors on projective surfaces, projective surfaces with  non negative Kodaira dimension, $-2$-curves,  birational morphism, number of canonical singularities
 
 \end{abstract}
 \maketitle

 \footnotetext[1]{Partially supported by FCT/Portugal through the program Lisbon Mathematics PhD, scholarship FCT - PD/BD/128421/2017}
  \footnotetext[2]{Partially supported by  FCT/Portugal  through Centro de An\'alise Matem\'atica, Geometria e Sistemas Din\^amicos (CAMGSD), IST-ID, projects UIDB/04459/2020 and UIDP/04459/2020}
   \footnotetext[3]{Partially supported by the project PRIN 2017SSNZAW$\_$004 ``Moduli Theory and Birational Classification"  of Italian MIUR and a member of GNSAGA of INDAM}
 \setcounter{tocdepth}{1}
\tableofcontents

 \section{Introduction}
 
 It is well known  that any  birational morphism $p\colon T\to W$  between two non singular projective surfaces is a composition of blow-ups at smooth points.   
 In spite of this simple description, the exceptional divisor of $p$ can be quite intricate if  some of the blow-ups occur at infinitely near points, i.e.,  points lying in the exceptional divisor of one of the previous blow-ups. This paper was born because in dealing with other problems we realized that the properties of divisors whose support  consists of $p$-exceptional curves are scattered across the literature and sometimes hard to locate.
  
  So here we make a very detailed analysis of these divisors.   As an application, we extend  Miyaoka's inequality on the number of canonical singularities on a  projective normal minimal surface  with  non-negative Kodaira dimension to the non minimal case.    Let us point out that  extensions to the non minimal case appear already  as a particular case  of  results of G. Megyesi and A. Langer in a more general framework (\cite{megyesi}, \cite{langer1}, \cite{langer}). Our proof uses only Miyaoka's result and  the inequality we obtain is slightly better than Megyesi's and Langer's (see Remark \ref{langer}).

         After this paper was written,  we learnt that the  exceptional divisors we are considering fit into the framework of ``sandwiched singularities'',  studied  in the substantial paper \cite{spiva} by  M.~Spivakovsky. Therefore some of the  properties we establish can be found there.  However  we believe that the present  note, having these properties in an explicit and self contained form,  can be a useful reference.  
   \medskip

The paper is organized as follows:  in \S 2 we quickly review the notions of arithmetical genus and numerical connectedness of curves on a smooth surface;  in \S 3 we discuss  properties of (reducible) $-1$-curves on a smooth surface $T$.   In \S 4 and \S 5  we establish many properties of curves  $\Delta$ with  $\Delta^2=-2$ and $K_T\Delta=0$  such that  there is  a birational morphism $p\colon T\to W$ to a smooth surface that contracts $\Delta$ to  a point (note in particular Propositions \ref{thm: An}  and  \ref{prop: nef}).  Finally, in  \S 6 we apply these results  to prove the extension of Miyaoka's inequality (Theorem \ref{thm: ineq}). 
\smallskip

\noindent {\em Acknowledgements:} we are indebted to the referee for useful suggestions, in particular for pointing out the references \cite{hi} and  \cite{langer}.

\subsection  {Notation} 
A {\em surface} $S$ is a normal  complex projective surface; when $S$ is smooth or has canonical singularities we denote as usual by $K_S$ the canonical class and write $p_g(S):=h^0(K_S)$ (\!{\em geometric genus})  and $q(S):=h^1(\OO_S)$ (\!{\em irregularity}). 

A   {\it curve} $D$ on a smooth surface $S$ is  a non-zero effective divisor (in particular $D$ is Gorenstein).
Given a curve $D$, we denote  its dualizing sheaf  by $\omega_D$  and  its arithmetic genus $p_a(D)$,  where by definition $1-p_a(D)=h^0(D,\OO_D)-h^1(D,\OO_D)$.
 A curve  $D$ is {\em $m$-connected} if for every decomposition $D=A+B$ as the sum of two curves $A$ and $B$ one has $A B \geq m$.
 A {\em $-2$-curve} (or {\em nodal curve}) on a smooth surface $S$ is an irreducible curve that is isomorphic to $\pp^1$ and has self-intersection $-2$.

 For a rational number $p$, $\lfloor p\rfloor$  is the largest integer $\leq p$.

\section{Arithmetic genus and $m-$connectedness} 
 In this section we collect some well known facts on curves on a surface that will be used throughout the paper.

 \noindent  Let $D$ be a curve on a surface $S$: 

\begin{itemize} 
\item If $D=D_1+D_2$, with $D_1$ and $D_2$ curves, then there is an exact sequence (\!{\em decomposition sequence}):
$$0\to \OO_{D_1}(-D_2)\to\OO_D\to \OO_{D_2}\to 0.$$
\item $\omega_D=(K_S+D)|_D$ and  $2p_a(D)-2= K_SD+D^2$ ({\em adjunction formula}).

\item By Serre duality,  one has $h^0(D,\OO_D)=h^1(D,\omega_D)$ and $h^1(D,\OO_D)=h^0(D,\omega_D)$. 
\item If $D$ decomposes as the sum of two curves $D_1,D_2$ then
$2p_a(D)-2=K_SD+D^2=K_SD_1+K_SD_2+D_1^2+D_2^2+2D_1D_2=2p_a(D_1)-2+2p_a(D_2)-2+2D_1D_2$
and so $p_a(D)=p_a(D_1)+p_a(D_2)-1+D_1D_2$.

\item (\cite[Lem.~A.4]{cfm}) Let $D$ be an
$m$-connected curve on the surface $S$ and let $D=D_1+D_2$ with $D_1$, $D_2$
curves. Then:
\begin{enumerate}[(\rm i)]
\item if  $D_1  D_2=m$, then $D_1$ and $D_2$ are
$\lfloor\frac{m+1}{2}\rfloor$-connected;

\item if $D_1$ is chosen to be minimal subject to the
condition $D_1  (D-D_1) = m$, then $D_1$ is $\lfloor\frac {m+3}{2}\rfloor$-connected.
\end{enumerate} 
\end{itemize}

\begin{lem}\label{dec}\cite[Lem.~1.2]{k} Let $D$ be a curve  such that $h^0(D,\OO_D)\geq 2$. Then there is a decomposition $D=D_1+D_2$ where $D_1$, $D_2$ are curves such that
\begin{enumerate}[\rm (\rm i)]
\item  $h^0(D_1,\OO_{D_1}(-D_2))\neq 0$;

\item $D_1 D_2\leq 0$;
\item for each component $\Gamma$ of $D_1$, $\Gamma  D_2\leq 0$;
\item for each component $\Gamma$ of $D_1$, the restriction map $$H^0(D_1,\OO_{D_1}(-D_2))\to H^0(\Gamma,\OO_{\Gamma}(-D_2))$$ is injective;

\item  for each component $\Gamma$ of $D_1$, $$h^0(D,\OO_D)\leq h^0(D_2,\OO_{D_2})+h^0(\Gamma,\OO_{\Gamma}(-D_2)).$$
 \end{enumerate}
\smallskip 
Furthermore if $D_1D_2=0$ then $\OO_{D_1}(-D_2)=\OO_{D_1}$, and $h^0(D_1,\OO_{D_1})=1$.
\end{lem}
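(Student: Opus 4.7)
The plan is to produce an initial decomposition satisfying (i) from the algebra structure on $R := H^0(D, \OO_D)$, then refine it by minimization and verify (ii)--(v) together with the final assertion.

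\emph{Existence.} Since idempotents in the local rings $\OO_{D,p}$ are trivial, idempotents of $R$ correspond bijectively to connected components of $D$. If $D$ is disconnected, I would take $D_1$ to be a connected component and $D_2$ its complement; then $D_1D_2=0$, the line bundle $\OO_S(-D_2)|_{D_1}$ is trivial, and the section $1\in H^0(\OO_{D_1}) = H^0(\OO_{D_1}(-D_2))$ gives (i). If $D$ is connected, then $R$ is local Artinian; since a reduced connected projective curve has $h^0(\OO)=1$, the assumption $\dim_\C R\ge 2$ forces $D$ to be non-reduced. Any nonzero element $s$ of the maximal ideal of $R$ is nilpotent, so at every point $s_p$ is a nilpotent element of $\OO_{D,p}$; hence $s$ lies in the global sections of the nilradical of $\OO_D$. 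The decomposition sequence for $D = (D-D_{\mathrm{red}}) + D_{\mathrm{red}}$ identifies this nilradical with $\OO_{D-D_{\mathrm{red}}}(-D_{\mathrm{red}})$, so setting $D_1 := D-D_{\mathrm{red}} > 0$ and $D_2 := D_{\mathrm{red}}$ gives $0\neq s\in H^0(\OO_{D_1}(-D_2))$.

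\emph{Minimization and verification.} Among decompositions $D = D_1 + D_2$ with $D_1,D_2 > 0$ satisfying (i), I would then choose one with $D_1$ minimal in the partial order on effective divisors. For (iv), given an irreducible component $\Gamma$ of $D_1$, the twisted decomposition sequence
\[
0 \to \OO_{D_1-\Gamma}(-(D_2+\Gamma)) \to \OO_{D_1}(-D_2) \to \OO_\Gamma(-D_2) \to 0
\]
identifies the kernel of restriction on $H^0$ with $H^0(\OO_{D_1-\Gamma}(-(D-(D_1-\Gamma))))$; this vanishes by minimality of $D_1$ if $D_1-\Gamma>0$, and trivially otherwise. Property (iii) follows because (iv) yields a nonzero section of the line bundle $\OO_\Gamma(-D_2)$ on the integral curve $\Gamma$, forcing its degree $-\Gamma\cdot D_2$ to be non-negative; summing $n_i(\Gamma_i\cdot D_2)\le 0$ over the components of $D_1$ with their multiplicities gives (ii). Property (v) follows by combining the decomposition sequence $0\to\OO_{D_1}(-D_2)\to\OO_D\to\OO_{D_2}\to 0$, which yields $h^0(\OO_D)\le h^0(\OO_{D_1}(-D_2))+h^0(\OO_{D_2})$, with the injection from (iv).

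\emph{The furthermore part.} If $D_1D_2=0$, (iii) forces $\Gamma\cdot D_2=0$ on every component, so $\OO_{D_1}(-D_2)$ has degree $0$ on each component of $D_1$. A nonzero section of a degree-$0$ line bundle on an integral projective curve is nowhere vanishing (its divisor of zeros would be effective of degree $0$, hence empty); and since at any point of $D_1$ the ideal of any component through that point is contained in the maximal ideal there, non-vanishing on each reduced component propagates to non-vanishing on all of $D_1$. Hence $\OO_{D_1}(-D_2)\cong\OO_{D_1}$, and applying (iv) gives $h^0(\OO_{D_1})\le h^0(\OO_\Gamma)=1$.

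The hard part will be the existence step in the connected case: cleanly identifying the nilradical of $\OO_D$ as $\OO_{D-D_{\mathrm{red}}}(-D_{\mathrm{red}})$ via the decomposition sequence, and lifting the pointwise nilpotency of $s$ to membership in the corresponding ideal sheaf, is the substantive piece of the argument; once this is done, the remaining properties fall out of the minimality choice and the basic exact sequences.
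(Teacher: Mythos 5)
Your proof is correct. Note first that the paper itself gives no argument for this lemma: it is quoted verbatim from \cite[Lem.~1.2]{k}, so there is no in-text proof to compare against. Your route differs mildly from the one in that reference: there one fixes a non-constant section $s\in H^0(D,\OO_D)$, normalizes it to vanish on some component, and takes $D_2$ maximal with $s\in H^0(\mathcal I_{D_2}\OO_D)\cong H^0(\OO_{D-D_2}(-D_2))$, whereas you first manufacture \emph{some} decomposition satisfying (i) (via a connected component in the disconnected case, and via a nilpotent in the local Artinian ring $H^0(D,\OO_D)$, identified with a section of the nilradical $\OO_{D-D_{\mathrm{red}}}(-D_{\mathrm{red}})$, in the connected non-reduced case) and then minimize $D_1$ over all decompositions satisfying (i). The two minimizations are not literally the same (yours ranges over all sections at once), but they serve the identical purpose: minimality of $D_1$ kills $H^0(\OO_{D_1-\Gamma}(-(D_2+\Gamma)))$, giving the injectivity (iv), from which (iii) follows by the degree argument on the integral curve $\Gamma$, (ii) by summing over components with multiplicities, and (v) by splicing the decomposition sequence with (iv); your Nakayama argument for the ``furthermore'' clause is also sound. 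The only substantive piece you flag as delicate --- that a nilpotent global section lies in the nilradical subsheaf and that this subsheaf is $\OO_{D-D_{\mathrm{red}}}(-D_{\mathrm{red}})$ --- is handled correctly (pointwise nilpotency of germs, plus the fact that the divisor $D_{\mathrm{red}}$ cuts out the reduced subscheme on a smooth surface). Your version is arguably more self-contained, at the cost of the case split on connectedness that the fixed-section argument avoids.
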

We note  the following immediate consequence of Lemma \ref{dec}:
\begin{cor}\label{cor: h0}  A $1$-connected curve $D$ satisfies $h^0(D,\OO_D)=1$. 
\end{cor}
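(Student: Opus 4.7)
The plan is to argue by contradiction using Lemma \ref{dec} directly. Suppose for contradiction that $D$ is $1$-connected but $h^0(D,\OO_D) \geq 2$. Then the hypothesis of Lemma \ref{dec} is satisfied, so one obtains a decomposition $D = D_1 + D_2$ into curves with $D_1 D_2 \leq 0$ by property (ii). On the other hand, the $1$-connectedness of $D$ says precisely that $A B \geq 1$ for every decomposition $D = A + B$ as a sum of two curves, so in particular $D_1 D_2 \geq 1$. This contradiction forces $h^0(D,\OO_D) \leq 1$.

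To conclude, I would note that $h^0(D,\OO_D) \geq 1$ always holds for a nonzero effective divisor $D$ (the constant function $1$ gives a nonzero global section of $\OO_D$), so in fact $h^0(D,\OO_D) = 1$. There is no real obstacle here: the statement is a near-immediate corollary, and the only thing to watch is that Lemma \ref{dec} genuinely requires $h^0 \geq 2$ and then produces a decomposition into \emph{two curves} (both $D_1$ and $D_2$ nonzero), so that the $1$-connectedness hypothesis applies to the pair $(D_1, D_2)$.
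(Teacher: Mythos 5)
Your argument is correct and is exactly what the paper intends when it calls the corollary an ``immediate consequence'' of Lemma \ref{dec}: the decomposition with $D_1D_2\le 0$ produced by the lemma directly contradicts $1$-connectedness, and $h^0(D,\OO_D)\ge 1$ is automatic for a curve. Nothing further is needed.
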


\begin{remark} 
Lemma \ref{dec} means that a curve $D$ such that  $h^0(D,\OO_D) \geq 2 $  is  necessarily not $1$-connected. However  $h^0(D,\OO_D)=1$ does not imply 1-connectedness. 
For instance a multiple fibre  $F=mF'$ of a fibration with connected  fibres is not 1-connected  but  $h^0(F,\OO_F)=1$ (cf.~\cite[Chp. III]{bpv}).

If $D$ is reduced  then $h^0(D,\OO_D)$ is exactly the number of connected components of $D$.  On the other hand a general $D$ may have connected support and $h^0(D,\OO_D)> 1$. For instance if $E$ is an irreducible $(-1)$-curve and $D=2E$,  then  $h^0(D,\OO_D)=3$.

\end{remark}

 From  the above we see:

\begin{lem} \label{lem: pa} Let $D$ be a curve on a surface $S$. The following hold:
\begin{enumerate}
\item  $p_a(D)\leq h^0(D,\omega_D)$ and equality holds if and only if $h^0(D,\OO_D)=1$;

\item  for any curve $A<D$ one has  $h^0(A,\omega_A)\leq h^0(D,\omega_D)$;
\item  if $D$ is $1$-connected then  $p_a(D)=h^0(D,\omega_D)$;
\item if $D$ is $1$-connected then for any curve $A<D$, $p_a(A)\leq p_a(D)$;
\item if $q(S)=0$, then $h^0(D,\omega_D)= h^0(S, K_S+D)-p_g(S)$;
\item  the kernel of the natural restriction map $H^1(S, \OO_S)\to H^1(nD, \OO_{nD})$ is independent of $n$; 
\item let $d$ be the dimension of the kernel of the restriction map  $H^1(S, \OO_S)\to H^1(D, \OO_D)$. For any $n\in \mathbb N$, $$h^1(S, \OO_S(-nD))=h^0(nD, \OO_{nD})-1 
+d.$$\end{enumerate}

\end{lem}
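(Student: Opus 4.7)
Parts (1)--(5) are direct consequences of standard tools. For (1), the defining identity $1 - p_a(D) = h^0(\OO_D) - h^1(\OO_D)$ combined with Serre duality on the Gorenstein curve $D$ (giving $h^1(\OO_D) = h^0(\omega_D)$) yields $p_a(D) = h^0(\omega_D) - h^0(\OO_D) + 1$; since $h^0(\OO_D) \geq 1$, this is the desired inequality, with equality exactly when $h^0(\OO_D) = 1$. For (2), writing $D = A + B$, one tensors the decomposition sequence $0 \to \OO_A(-B) \to \OO_D \to \OO_B \to 0$ with the line bundle $\OO_S(K_S + D)$ to obtain $0 \to \omega_A \to \omega_D \to \omega_B(A) \to 0$ (using $(K_S + D - B)|_A = \omega_A$), and the injection on global sections gives the inequality. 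Part (3) combines (1) with Corollary~\ref{cor: h0}: a $1$-connected curve has $h^0(\OO_D) = 1$. Part (4) chains the previous items: $p_a(A) \leq h^0(A, \omega_A) \leq h^0(D, \omega_D) = p_a(D)$. For (5), the long exact cohomology of $0 \to K_S \to K_S + D \to \omega_D \to 0$, combined with $h^1(K_S) = q(S) = 0$ (Serre duality on $S$), yields the formula.

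The main obstacle is (6). Writing $K_n := \ker(H^1(\OO_S) \to H^1(\OO_{nD}))$, the factorization $\OO_S \to \OO_{(n+1)D} \to \OO_{nD}$ immediately gives $K_{n+1} \subseteq K_n$, so $\{K_n\}_{n \geq 1}$ is a decreasing sequence of subspaces of the finite-dimensional vector space $H^1(\OO_S)$, and the claim is that this chain is constant. The plan is to proceed by induction on $n$, using the short exact sequence $0 \to \OO_D(-nD) \to \OO_{(n+1)D} \to \OO_{nD} \to 0$: given $\alpha \in K_1 = K_n$ (inductive hypothesis), its restriction to $(n+1)D$ lies in the image of $H^1(\OO_D(-nD)) \to H^1(\OO_{(n+1)D})$, and one must show that this restriction vanishes. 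The cleanest route is to dualize via Serre duality on $S$ and on $nD$: (6) translates into the statement that the image of the connecting map $H^0(\omega_{nD}) \to H^1(K_S)$ associated to $0 \to K_S \to K_S + nD \to \omega_{nD} \to 0$ is independent of $n$. The natural inclusion $\omega_D \hookrightarrow \omega_{nD}$, obtained by applying $\mathcal{H}om(-, \omega_S)$ to $0 \to \OO_{(n-1)D}(-D) \to \OO_{nD} \to \OO_D \to 0$, gives one inclusion of images in $H^1(K_S)$; the reverse inclusion follows by tracking naturality of the connecting homomorphisms along the filtration $K_S \hookrightarrow K_S + D \hookrightarrow K_S + nD$, combined with the inductive hypothesis.

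For (7), the long exact cohomology of $0 \to \OO_S(-nD) \to \OO_S \to \OO_{nD} \to 0$, together with the observations that $H^0(\OO_S(-nD)) = 0$ (no nonzero global function on $S$ vanishes along the effective divisor $nD$) and that the constant $1 \in H^0(\OO_S)$ has nonzero image in $H^0(\OO_{nD})$, yields
\[
0 \to H^0(\OO_{nD})/\mathbb{C} \to H^1(\OO_S(-nD)) \to K_n \to 0.
\]
Taking dimensions and applying (6) gives $h^1(\OO_S(-nD)) = h^0(\OO_{nD}) - 1 + \dim K_n = h^0(\OO_{nD}) - 1 + d$, as claimed.
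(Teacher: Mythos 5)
Parts (1)--(5) and (7) of your argument are correct and essentially identical to the paper's: (1) from the definition of $p_a$ plus Serre duality on $D$, (2) by twisting the decomposition sequence by $K_S+D$, (3)--(4) by combining with Corollary \ref{cor: h0}, (5) from $0\to K_S\to K_S+D\to\omega_D\to 0$ and $h^1(K_S)=q(S)=0$, and (7) from the ideal-sheaf sequence together with (6).

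The problem is (6). The paper does not prove this item: it is Ramanujam's lemma and is handled by citation to \cite{ra} (see also \cite{bpv}, IV.12.7). Your sketch correctly reduces the claim to showing that the decreasing chain $K_{n+1}\subseteq K_n$ stabilizes at $n=1$, equivalently (dually) that the images $I_n$ of the connecting maps $H^0(\omega_{nD})\to H^1(K_S)$ satisfy $I_n\subseteq I_1$; but the sentence ``the reverse inclusion follows by tracking naturality of the connecting homomorphisms along the filtration, combined with the inductive hypothesis'' is exactly where the content of the lemma sits, and naturality alone does not deliver it. Naturality of the diagram comparing $0\to K_S\to K_S+D\to\omega_D\to 0$ with $0\to K_S\to K_S+nD\to\omega_{nD}\to 0$ only gives the easy inclusion $I_1\subseteq I_n$ (dual to $K_n\subseteq K_1$, which you already have); a section of $\omega_{nD}$ whose image in the quotient $\omega_{nD}/\omega_D\cong\OO_{(n-1)D}(K_S+nD)$ is nonzero has a connecting image in $H^1(K_S)$ that no formal diagram chase forces into $I_1$. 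Likewise, in your undualized version, an element of the image of $H^1(\OO_D(-nD))\to H^1(\OO_{(n+1)D})$ has no formal reason to vanish. The standard proofs all inject genuine global input: one identifies $K_n$ with the tangent space at the origin of the kernel of $\Pic^0(S)\to\Pic(nD)$ and uses that $\Pic^0(S)$ is an abelian variety, so that its (proper) subgroup cannot map nontrivially to the unipotent kernel of $\Pic(nD)\to\Pic(D_{\mathrm{red}})$ --- or equivalently one uses the exponential sequence and Hodge theory. Without such an ingredient your induction does not close; the correct fix is either to supply that argument or to do as the paper does and cite Ramanujam.
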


\begin{proof}
(i) follows from the definition of $p_a(D)$ and Serre duality.

(ii) follows by twisting the decomposition sequence for  $D_1=A$ and $D_2:=D-A$ by $K_S+D$ and using adjunction. 

(iii) and (iv)  are consequences of (i), (ii)  and Corollary \ref{cor: h0}.

(v) follows by taking cohomology of the exact sequence:
$$0\to K_S\to K_S+D\to \omega_D\to 0.$$

 (vi)   is due to C.P. Ramanujam (\cite{ra})   (see also, e.g., \cite{bpv}, IV.12.7).

Finally (vii) follows by taking the cohomology sequence  of the exact sequence:
$$0\to \OO_S(-D)\to  \OO_S\to \OO_D\to 0$$ and using (vi).

\end{proof}
 \section{Exceptional divisors of blow-ups of surfaces}

In this section we consider a birational morphism $p\colon T\to W$ of smooth projective surfaces and we study  the numerical properties of the exceptional divisor of $p$. 

We start by recalling some  basic facts, also in order to set  the notation:  
\begin{lem}\label{lem: Ei} Let  $p\colon T\to W$ be a birational morphism of smooth projective surfaces and set $s:=K^2_W-K^2_T$. Then 
there are effective non-zero divisors  $E_1, \dots E_s$ of $T$ such that: 
\begin{enumerate}
\item $K_T=p^*K_W+ \sum_{i=1}^s E_i$ and  $p^*K_W\Gamma=0$ for every component $\Gamma$ of $\sum_{i=1}^s E_i$; 
\item the irreducible components of $E_i$ are smooth rational curves and the dual graph of the support of $E_i$ is a tree;
\item $K_TE_i=-1$ for all $i$,  $E_iE_j=-\delta_{ij}$
\item the intersection form on the  components of $\sum_{i=1}^s E_i$ is negative definite.
\end{enumerate}
\end{lem}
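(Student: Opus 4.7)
My plan is to factor $p$ into a composition of blow-ups and take the $E_i$ to be the total transforms of the successive exceptional curves. Since any birational morphism of smooth projective surfaces decomposes as $p = \pi_1 \circ \cdots \circ \pi_s$ with $\pi_j \colon T_j \to T_{j-1}$ the blow-up of a point, $T_0 = W$ and $T_s = T$, and since each blow-up decreases $K^2$ by one, the number of factors equals $s = K_W^2 - K_T^2$. Let $\tilde E_j \subset T_j$ be the exceptional curve of $\pi_j$ and $f_j \colon T \to T_j$ the composed projection; I set $E_j := f_j^* \tilde E_j$.

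Part (i) then follows by iterating the formula $K_{T_j} = \pi_j^* K_{T_{j-1}} + \tilde E_j$ and pulling back to $T$; the vanishing $p^* K_W \cdot \Gamma = 0$ for any component $\Gamma$ of $\sum E_i$ is the projection formula, since $p_* \Gamma = 0$. For (iii), self-intersection is invariant under pullback of divisors along birational morphisms, so $E_i^2 = \tilde E_i^2 = -1$; likewise $K_T \cdot E_i = f_i^* K_{T_i} \cdot E_i = K_{T_i} \tilde E_i = -1$, since the correction $K_T - f_i^* K_{T_i}$ is supported on curves contracted by $f_i$ and hence orthogonal to $E_i = f_i^* \tilde E_i$. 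For $i < j$, the key observation is that $E_i$ is pulled back from $T_{j-1}$, so on $T_j$ one has $E_i|_{T_j} \cdot \tilde E_j = E_i|_{T_{j-1}} \cdot \pi_{j,*} \tilde E_j = 0$, whence $E_i E_j = 0$ after pulling back to $T$.

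For (ii), I would induct on the number of blow-ups after $\pi_i$: on $T_i$ the support of $E_i$ is the single $\pp^1$, $\tilde E_i$; each later blow-up $\pi_j$ either has centre off the current support (no change) or is centred at a smooth point of a single component or at the transverse intersection of two components, in which case the strict transforms remain smooth rational, $\tilde E_j$ is a further $\pp^1$, and the dual graph either gains a new leaf or has an existing edge subdivided --- both operations preserve the tree property. Finally (iv) is linear algebra: by (iii) the classes $E_1, \dots, E_s$ have intersection matrix $-I$, so they are linearly independent, and a unipotent change-of-basis matrix relates them to the $s$ irreducible components of $\sum E_i$, transferring negative definiteness to the form on those components. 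The most delicate point is the orthogonality $E_i E_j = -\delta_{ij}$ in (iii): one must choose the intermediate surface on which each intersection is most naturally evaluated and apply the projection formula in the right direction; once this is in hand, the remaining properties reduce to bookkeeping.
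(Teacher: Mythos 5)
Your proposal is correct and follows essentially the same route as the paper: factor $p$ into blow-ups, take each $E_i$ to be the total transform of the corresponding exceptional curve, derive (i) and (iii) from the canonical class formula and the projection formula, prove (ii) by induction on the subsequent blow-ups, and obtain (iv) by the unipotent change of basis between the $E_i$ and the irreducible components. Your treatment of the orthogonality $E_iE_j=0$ for $i\ne j$ is in fact spelled out more carefully than in the paper, which simply asserts that (iii) follows from the definitions.
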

\begin{proof}  The morphism $p$ is a composition $\epsilon_s \circ \dots \circ\epsilon_1$ of blow ups at (possibly infinitely near) points  $q_1,\dots q_s$ of $W$ and, for $i=1,\dots s$, the curve $E_i$  is the total transform on $T$ of the exceptional curve of $\epsilon_i$. 
Now (i) and (iii) follow from the definition of $E_i$ and from the formula for the canonical class of a blow-up. (ii) is easily proven by induction on $n$.

(iv) is a general property of divisors that can be contracted (cf. for instance \S A.7 of \cite{r}), but also follows directly from (ii), since $E_1,\dots E_s$ span the same subgroup  of $\Pic(T)$ as the components of $\sum_{i=1}^sE_i$.
\end{proof}

Next we explore subtler numerical properties of the exceptional divisor:

\begin{prop} \label{prop: A} In the same assumptions and notation as in Lemma \ref{lem: Ei}, one has:
\begin{enumerate}
\item  each component  $\Gamma$ of $E_i$ satisfies $\Gamma E_i\leq 0$; 
\item  $E_i$ has a unique component  $\Gamma_i$  such that $\Gamma_i E_i<0$. In addition $\Gamma_i E_i=-1$  and $\Gamma_i$ appears with multiplicity 1 in $E_i$;

\item   $E_i$ is 1-connected for all $i$;

\item any effective non-zero divisor $A$  contracted by $p$   satisfies $h^0(A,\omega_A)=0$  (and hence $p_a(A)\leq 0$);

\item  if $E_i$ and $E_j$ with $i\neq j$ have a common component then either $E_j<E_i$ or $E_i<E_j$;
\item if $E_i<E_j$ with $i\neq j$, every component $\Gamma$ of $E_i$ satisfies $\Gamma E_j=0$;

\item  any effective divisor   $\Delta$  contracted by $p$  and satisfying $\Delta^2=-1$  is one of the $E_i$;

\item  any 1-connected effective divisor $\Delta$  contracted by $p$  satisfies $\Delta E_i\leq 1$ for each $i$.
\end{enumerate} 
\end{prop}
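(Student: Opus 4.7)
The main structural input is the description $E_i=p_i^*\tilde E_i$, where $p_i\colon T\to T_i$ is the composition of the blow-ups $\epsilon_{i+1},\ldots,\epsilon_s$ and $\tilde E_i\subset T_i$ is the irreducible exceptional curve of $\epsilon_i$. Parts (i) and (ii) then follow from the projection formula: for a component $\Gamma$ of $E_i$, $\Gamma\cdot E_i=(p_i)_*\Gamma\cdot\tilde E_i$ vanishes if $\Gamma$ is $p_i$-exceptional and equals $\tilde E_i^2=-1$ if $\Gamma$ dominates $\tilde E_i$; in the latter case $\Gamma$ is necessarily the strict transform $\tilde E_i^{\text{st}}$ (the unique curve on $T$ dominating $\tilde E_i$), appearing with multiplicity $1$ in $p_i^*\tilde E_i$ since $\tilde E_i$ is reduced.

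Part (iii) combines (i), (ii) with the negative-definiteness of the intersection form (Lemma~\ref{lem: Ei}(iv)): in a non-trivial decomposition $E_i=A+B$, the unique ``negative'' component $\Gamma_i:=\tilde E_i^{\text{st}}$ has multiplicity $1$ in $E_i$ and hence lies with multiplicity $1$ in exactly one of $A$, $B$ (say $A$), giving $A\cdot E_i=-1$ and $B\cdot E_i=0$; then $B^2=-AB$, and $B>0$ forces $B^2<0$, hence $AB\geq 1$. For (iv), which by Serre duality amounts to $h^1(A,\OO_A)=0$, I would invoke the vanishing $R^1p_*\OO_T=0$ (standard for a birational morphism of smooth surfaces) together with the theorem on formal functions and the fact that sheaves on the one-dimensional scheme $A$ have no $H^2$; the bound $p_a(A)\leq 0$ then follows from Lemma~\ref{lem: pa}(i). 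Part (vii) is immediate from $E_i E_j=-\delta_{ij}$: in the $\Z$-basis of Lemma~\ref{lem: Ei}(iv), writing $\Delta=\sum c_iE_i$ gives $\Delta^2=-\sum c_i^2=-1$, so exactly one $|c_i|=1$, and effectivity picks the positive sign.

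For (v) and (vi), I would exploit the factorization $p_j=q\circ p_i$ for $j<i$, where $q=\epsilon_{j+1}\circ\cdots\circ\epsilon_i\colon T_i\to T_j$ contracts $\tilde E_i$ to a single point. A common component of $E_i$ and $E_j$ forces $q(\tilde E_i)\in\tilde E_j$, i.e., $q_i$ lies above $\tilde E_j$ on $T_{i-1}$; in that case $q^*\tilde E_j$ on $T_i$ contains $\tilde E_i$ with multiplicity $\geq 1$, so $q^*\tilde E_j\geq\tilde E_i$, and pulling back by $p_i$ yields $E_j\geq E_i$, settling (v). For (vi), if $E_i<E_j$ then $j<i$ by (v), every component of $E_i$ is one of $E_j$, and by (ii) the only component of $E_j$ with non-zero $E_j$-intersection is $\tilde E_j^{\text{st}}$; the $p_i$-image of $\tilde E_j^{\text{st}}$ is the strict transform $\tilde E_j^{(i)}\subset T_i$, a curve distinct from $\tilde E_i$, so $\tilde E_j^{\text{st}}$ is not a component of $E_i$.

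Part (viii) is the most delicate. By the projection formula, $\Delta\cdot E_i=-d_i+\sum_{k\in S_i}d_k$, where $d_k$ is the multiplicity in $\Delta$ of $\Gamma_k:=\tilde E_k^{\text{st}}$ and $S_i=\{k<i:q_i\in\tilde E_k^{(i-1)}\}$ has at most two elements, by the normal-crossings/tree structure of the exceptional divisor on $T_{i-1}$. Assuming for contradiction that $\Delta\cdot E_i\geq 2$, i.e., $\sum_{k\in S_i}d_k\geq d_i+2$, the plan is to construct a decomposition $\Delta=A+B$ with $AB\leq 0$ contradicting $1$-connectedness: removing $\Gamma_i$ from the tree of exceptional components on $T$ disconnects it, and one places the (up to two) $\Gamma_k$ with $k\in S_i$ on opposite sides, distributing the multiplicity $d_i$ of $\Gamma_i$ appropriately to force $AB\leq 0$. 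The combinatorial bookkeeping — particularly handling the case where later blow-ups on $T$ separate the neighbors of $\Gamma_i$ in the tree or attach extra branches to it — is the principal obstacle.
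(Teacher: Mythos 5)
Parts (i)--(vii) of your proposal are correct, and in several places you take a genuinely different route from the paper. For (i)--(ii) the paper intersects $\Gamma$ with $p_i^*C=C'+E_i$ for an auxiliary smooth curve $C$ through $q_i$; your projection-formula computation $\Gamma\cdot p_i^*\tilde E_i=(p_i)_*\Gamma\cdot\tilde E_i$ is an equivalent repackaging and also delivers the multiplicity-one statement in (ii) directly. For (iv) the paper propagates $h^1(E_i,\OO_{E_i})=0$ to all $nE_i$ via Ramanujam's lemma (Lemma \ref{lem: pa}(vi)); your route via $R^1p_*\OO_T=0$ and formal functions is the standard Artin-style argument and works, provided you add that the transition maps in the inverse system are surjective (no $H^2$ on a one-dimensional scheme), so that by finite-dimensionality the vanishing of the limit forces the vanishing of each term, and that every $A$ contracted to a point is dominated by some infinitesimal neighbourhood of the fibre. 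For (v)--(vi) the paper argues purely numerically with negative definiteness; your geometric argument is sound once you spell out the dichotomy that $\tilde E_i=\epsilon_i^{-1}(q_i)$ is either contained in or disjoint from $q^{-1}(\tilde E_j)$, so that a common component really does force $q_i$ to lie over $\tilde E_j$. Your proof of (vii) --- $\Delta$ lies in the lattice spanned by the $E_i$, so $\Delta^2=-\sum c_i^2=-1$ forces $\Delta=\pm E_j$, and effectivity rules out $-E_j$ --- is cleaner than the paper's, which first shows $K_T\Delta=-1$ and then that $(E_j-\Delta)^2\ge 0$ for some $j$.

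Part (viii) is a genuine gap: you have a plan, not a proof, and you say so yourself. The combinatorial scheme (split the dual tree at $\Gamma_i$ and distribute its neighbours between $A$ and $B$ so as to violate $1$-connectedness) runs into exactly the difficulty you flag: later blow-ups can detach $\tilde E_k^{\mathrm{st}}$ from $\Gamma_i$ in the dual graph on $T$, the multiplicities $d_k$ are unconstrained, and even the bound $|S_i|\le 2$ needs an induction showing the exceptional configuration stays normal crossings. It is not clear this can be pushed through. The paper's argument is two lines and uses only what you have already established: $\Delta$ and $E_i$ are $1$-connected and contracted, so by (iv) and Lemma \ref{lem: pa}(iii) both satisfy $p_a=h^0(\omega)=0$; by (iv) again $p_a(\Delta+E_i)\le 0$; and the additivity formula $p_a(\Delta+E_i)=p_a(\Delta)+p_a(E_i)+\Delta E_i-1=\Delta E_i-1$ then gives $\Delta E_i\le 1$. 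You should replace your combinatorial attempt with this.
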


\begin{proof}
(i) is well known in greater generality (see \cite{r}, \S 4), and in our case it is easily proven as follows. Write  $p$ as a composition $\epsilon_s \circ \dots \circ\epsilon_1$ of blow ups at (possibly infinitely near) points  $q_1,\dots q_s$ of $W$ and let $p_i:=\epsilon_s \circ \dots \circ\epsilon_i\colon T\to W_i$. Take a smooth curve $C$ on $W_i$ passing through $q_i$ and not passing through $q_{i+1}, \dots q_s$, so that $p_i^*C=C'+E_i$, where $C'$ is the strict transform of C. Then  we have $0=\Gamma (p_i^*C)=\Gamma C'+\Gamma E_i$, namely $\Gamma E_i=-\Gamma C'\le 0$, since  $\Gamma$ is not a component of $C'$. 
\medskip

(ii)  Write  $E_i=\sum_jn_{ij}\Gamma_{ij}$, with $n_{ij}>0$ and $\Gamma_{ij}$ distinct irreducible curves.  Then $-1=E_i^2=\sum_jn_{ij}\Gamma_{ij}E_i$ and the claim follows immediately from (i).
\medskip

(iii) Let $E_i=A+B$, with $A,B>0$ be a decomposition. Note that  we have $A^2<0, B^2<0$  by Lemma \ref{lem: Ei} (iv). So from $-1=E_i^2=A^2+2AB+B^2$  one has necessarily $AB\geq 1$. 
\medskip

(iv) To prove  the assertion  it suffices to show that each connected component $\Gamma$ of $A$ satisfies $h^0(\Gamma,\omega_\Gamma)=0$. So by Lemma \ref{lem: pa} (ii) it is enough to show that, for each $E_i$   and every $n>0$, $h^0(nE_i, \omega_{nE_i})=0$.  

 Since by duality  $h^0(nE_i, \omega_{nE_i})=h^1(nE_i, \OO_{nE_i})$, it suffices to show that  $h^1(nE_i, \OO_{nE_i})=0$.

 The   exact sequence
$$0\to \OO_T(-nE_i)\to \OO_T\to \OO_{nE_i}\to 0$$ induces the cohomology sequence

\begin{gather*}\dots\to H^1(T, -nE_i)\to H^1(T, \OO_T)\to H^1(nE_i, \OO_{nE_i})\to \\
\to H^2(T,-nE_i)\to H^2(T, \OO_T)\to 0.
\end{gather*}

Since $h^0(T, K_T)=h^0(T,K_T+nE_i)$, and thus by  duality  $h^2(T, \OO_T)=h^2(T,-nE_i)$,  the map $H^1(T, \OO_T)\to H^1(nE_i, \OO_{nE_i})$ is surjective.

Now Lemma \ref{lem: Ei} (iii)  and the adjunction formula give $p_a(E_i)=0$. Since $E_i$ is 1-connected by (iii), we have $h^0(E_i, \omega_{E_i})=0$ and so by duality  $h^1(E_i, \OO_{E_i})=0$. 
Hence the kernel of the map  $ H^1(T, \OO_T)\to H^1(E_i, \OO_{E_i})$ is exactly  $ H^1(T, \OO_T)$.  Then, by Lemma \ref{lem: pa} (vi), also  the kernel of the map  $ H^1(T, \OO_T)\to H^1(nE_i,\OO_{nE_i})$ is exactly  $ H^1(T, \OO_T)$  implying $h^1(nE_i,\OO_{nE_i})=0$.
\medskip

(v) Write $E_i=A+B$, $E_j=A+C$ where $A,B,C$ are effective divisors with $A\neq 0$ and $B$ and $C$ have no common components (and so $BC\geq 0$).  If $B=0$ or $C=0$ the claim is proven, so assume by contradiction that both $B$ and $C$ are non-zero. From   $-2=(E_i-E_j)^2=(B-C)^2=B^2-2BC+C^2$ and Lemma \ref{lem: Ei} (iv) we have necessarily $BC=0$ and   $B^2=C^2=-1$. 

 From (iii) we have $AB>0, AC>0$.  Since from (i)  $BE_i\leq 0$, $CE_j\leq 0$,  the only possibility  is $AB=AC=1$ and therefore $A^2=E_i^2-2AB-B^2=-2$.  But then 
$(A+B+C)^2= 0$, contradicting Lemma \ref{lem: Ei}, (iv).  
\medskip

 (vi) It suffices to note that  $E_iE_j=0$ and that $E_i<E_j$ implies by (i) that every component $\Gamma$ of $E_i$ satisfies $\Gamma E_j\le 0$. 

\medskip

(vii) Since $\Delta$ is contracted by $p$, arguing as in the proof of (iii)  we see that $\Delta$ is 1-connected and so  $p_a(\Delta)=h^0(\omega_{\Delta})=0$ by (iv).   Hence $K_T\Delta=-1$. From $(p^*K_W)\Delta=0$ we conclude that $\Delta \sum E_i=-1$.  So there is at least one $E_j$ such that $E_j\Delta<0$. But then $(E_j-\Delta)^2\geq 0$ and so $E_j-\Delta=0$ by Lemma \ref{lem: Ei}, (iv). 
\medskip

(viii) By (iv) we have $p_a(\Delta+E_i)\le 0$ and $p_a(\Delta)=p_a(E_i)=0$, since the latter two divisors are $1$-connected. Since $p_a(\Delta+E_i)=p_a(\Delta)+p_a(E_i)+\Delta  E_i-1$ it follows   $ \Delta E_i\leq 1$.

\end{proof}

 \section{Exceptional divisors with $K\Delta=0$ and $\Delta^2=-2$}
 
Our focus in this section is on the properties of effective divisors  $\Delta$ with $K_T\Delta=0$ and $\Delta^2=-2$ that are contracted by a birational morphism of smooth surfaces. The results  obtained here will be applied in the next section to A-D-E configurations (cf. Definition \ref{def: ADE}).

\begin{prop}\label{prop: delta}
 In the same assumptions and notation as in Lemma \ref{lem: Ei}, let $\Delta$ an  effective divisor of $T$ such that $K_T\Delta=0$, $\Delta^2=-2$. 
\newline If $\Delta$ is contracted by $p$, then: 
\begin{enumerate}

\item   $\Delta$ is 1-connected; 
\item  if $\Delta'\ne \Delta$ is also an effective divisor contracted by $p$ and such that $K_T\Delta'=0$, ${\Delta'}^2=-2$, then $-1\le \Delta\Delta'\le 1$;

\item   there are indices $j$ and $k$ such that:
$$\Delta E_j=1, \quad \Delta E_k=-1,\quad \Delta E_i=0 \quad \mbox{for} \ i\ne j,k$$
and   $E_k=E_j+\Delta$.

\item  Let  $E_j$ be such that $\Delta E_j=1$. If $\Delta'\ne \Delta$ is also an effective divisor contracted by $p$ and such that $K_T\Delta'=0$, ${\Delta'}^2=-2$ and  $\Delta' \Delta=0$, then $\Delta' E_j=0$.

\end{enumerate}
\end{prop}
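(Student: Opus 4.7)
The plan is to handle the four parts in order, with (iii) carrying most of the content and (i), (ii), (iv) following by short arguments.

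For (i) I would copy the template of Proposition~\ref{prop: A}(iii). Given any decomposition $\Delta = A+B$ with $A,B>0$, both $A$ and $B$ are supported on the exceptional locus, so $A^2,B^2\leq -1$ by Lemma~\ref{lem: Ei}(iv). Substituting into $-2=\Delta^2=A^2+2AB+B^2$ gives $AB\geq 0$. The case $AB=0$ forces $A^2=B^2=-1$, and Proposition~\ref{prop: A}(vii) then identifies $A=E_i$, $B=E_j$; but this yields $K_T\Delta=-2$, contradicting the assumption $K_T\Delta=0$. Hence $AB\geq 1$.

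For (ii) I would work inside the $\mathbb Q$-subspace $V\subset \Pic(T)\otimes\mathbb Q$ spanned by the components of $\sum E_i$, which is negative definite by Lemma~\ref{lem: Ei}(iv) and contains both $\Delta$ and $\Delta'$. Since $\Delta+\Delta'$ is a non-zero effective divisor in $V$ and $\Delta-\Delta'\neq 0$ in $V$ (as $\Delta\neq\Delta'$), both of the expressions $(\Delta\pm\Delta')^2=-4\pm 2\Delta\Delta'$ are strictly negative, which gives $-1\leq \Delta\Delta'\leq 1$.

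For (iii) I exploit the orthogonal $\mathbb Q$-basis $\{E_1,\dots,E_s\}$ of $V$, which satisfies $E_iE_j=-\delta_{ij}$. Writing $\Delta=\sum c_iE_i$ with $c_i\in\mathbb Q$, one reads off $c_i=-\Delta E_i\in\mathbb Z$. The hypothesis $\Delta^2=-2$ becomes $\sum c_i^2=2$, and $K_T\Delta=0$ together with $K_T=p^*K_W+\sum E_i$ and $p^*K_W\cdot \Delta=0$ (projection formula, since $p_*\Delta=0$) gives $\sum c_i=0$. The key bound is $|c_i|\leq 1$: the inequality $\Delta E_i\leq 1$ is Proposition~\ref{prop: A}(viii), applicable thanks to (i); the opposite bound $\Delta E_i\geq -1$ follows from negative definiteness applied to $E_i-\Delta\neq 0$ (note $\Delta\neq E_i$ since their self-intersections differ), which reads $-3-2\Delta E_i<0$. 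The only integer solutions of $\sum c_i^2=2$, $\sum c_i=0$ with $c_i\in\{-1,0,1\}$ have exactly one $c_k=1$ and one $c_j=-1$; this gives $\Delta E_j=1$, $\Delta E_k=-1$, $\Delta E_i=0$ otherwise, and the identity $\Delta=E_k-E_j$, i.e.\ $E_k=E_j+\Delta$. This part is the main obstacle, as it is the one that genuinely mixes the arithmetic constraints with the numerical data; once it is in place everything else is a formal consequence.

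For (iv) I apply (iii) to both divisors to write $\Delta=E_k-E_j$ and $\Delta'=E_{k'}-E_{j'}$ with $j\neq k$, $j'\neq k'$. A direct computation yields
\[
\Delta\Delta' = -\delta_{jj'}+\delta_{jk'}+\delta_{kj'}-\delta_{kk'}, \qquad \Delta' E_j = \delta_{jj'}-\delta_{jk'}.
\]
The goal is to show $j\notin\{j',k'\}$. If $j=j'$, the vanishing $\Delta\Delta'=0$ (using $k\neq j$) forces $\delta_{jk'}-\delta_{kk'}=1$, hence $k'=j=j'$, contradicting $j'\neq k'$. If instead $j=k'$, the same vanishing gives $\delta_{jj'}-\delta_{kj'}=-1$ (using $k\neq j$), hence $j'=k=k'$; wait, more carefully, $\Delta\Delta'=1-\delta_{jj'}+\delta_{kj'}=0$ forces $\delta_{jj'}=1$ and $\delta_{kj'}=0$, so $j'=j=k'$, again contradicting $j'\neq k'$. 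Therefore $\delta_{jj'}=\delta_{jk'}=0$ and $\Delta'E_j=0$.
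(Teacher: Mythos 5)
Your proof is correct, but parts (i), (iii) and (iv) take a genuinely different route from the paper's. For (i) the paper rules out the case $A^2=B^2=-1$ by adjunction (each of $K_TA$, $K_TB$ is then odd, so one of them is positive, contradicting $p_a\le 0$ from Proposition \ref{prop: A}(iv)), whereas you invoke Proposition \ref{prop: A}(vii) to identify $A$ and $B$ with two of the $E_i$ and contradict $K_T\Delta=0$ directly; both work. For (iii) the paper argues synthetically: it finds one $j$ with $\Delta E_j>0$, gets $\Delta E_j=1$ from Proposition \ref{prop: A}(viii), and then applies Proposition \ref{prop: A}(vii) to the effective divisor $\Delta+E_j$, which produces $E_k$ as an equality of \emph{divisors}. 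Your orthogonal-basis computation ($\sum c_i^2=2$, $\sum c_i=0$) is cleaner arithmetic --- in fact $\sum c_i^2=2$ over $\mathbb Z$ already forces $c_i\in\{-1,0,1\}$, so your appeals to Proposition \ref{prop: A}(viii) and to $(E_i-\Delta)^2<0$ are redundant, and you do not even need part (i) here --- but it yields $E_k=E_j+\Delta$ a priori only as an identity of numerical classes. Since this identity is later used at the level of divisors (e.g.\ in Proposition \ref{prop: theta}, where $E_k=\Delta+E_j$ is decomposed component by component), you should add the one-line remark that a divisor supported on the exceptional locus is determined by its class in $\Num(T)$, the components being linearly independent by negative definiteness (Lemma \ref{lem: Ei}(iv)). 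For (iv) the paper tests the auxiliary divisors $2E_j+\Delta\pm\Delta'$ against negative definiteness; your Kronecker-delta bookkeeping, which turns (iv) into a purely formal consequence of (iii), is equally valid once the false first computation in the $j=k'$ case (the one you correct mid-sentence) is deleted. Part (ii) is identical to the paper's argument.
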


\begin{proof}
(i) If   $\Delta=A+B$ with $A,B>0$, then   $-2=\Delta^2= A^2+2AB+B^2$. If   $AB\leq 0$, then  by Lemma \ref{lem: Ei} (iv)  the only possibility is $A^2=B^2=-1$.  Now $K_TA$ and $K_TB$ are non zero  by the adjunction formula, so  from $K_T\Delta=0$ we see that, say, $K_TA>0$ contradicting $p_a(A)\leq 0$ (cf. Proposition \ref{prop: A}, (iv)).
\medskip

(ii) Again by Lemma \ref{lem: Ei} (iv), we have $0>(\Delta+\Delta')^2=-4+2\Delta \Delta'$, namely $\Delta \Delta'\le 1$. Similarly, $0>(\Delta-\Delta')^2$ gives $\Delta \Delta'\ge -1$.
\medskip

(iii) 
 Let $L$ be the subgroup of $\Pic(T)$ generated by the irreducible curves contracted by $p$. The curves $E_1,\dots E_s$ give a basis for $L$ and the intersection form restricted to $L$ is negative definite, so there is at least an index $j$ with $\Delta E_j\ne 0$.  Since $\Delta p^*K_W=0$, we have $0=\Delta K_T=\Delta\sum E_i$, so we may assume $\Delta E_j>0$. By Proposition \ref{prop: A} (viii) it follows  $\Delta E_j=1$. So the effective divisor $\Delta+E_j$ satisfies $(\Delta+E_j)^2=-1$,  and therefore  by Proposition \ref{prop: A} (vii) there is an index $k$ such that $E_k=E_j+\Delta$. We have $-1= E_k^2=E_kE_j+E_k\Delta=E_k\Delta$. If $i\ne j, k$ then we have $0=E_kE_i=E_jE_i+\Delta E_i=\Delta E_i$.

  \medskip
 (iv)  Consider the divisors $D= 2E_j+\Delta+\Delta'$ and  $D'= 2E_j+\Delta-\Delta'$, which are non zero because $K_TD=K_TD'=-2$.  Then $D^2=4E_j^2+4E_j \Delta+4E_j \Delta'+\Delta^2+\Delta'^2=4E_j\Delta'-4$ and  $D'^2=-4E_j\Delta'-4$. By negative definiteness  (Lemma \ref{lem: Ei} (iv)) the only possibility is $E_j\Delta'=0$.

  \end{proof}

We apply Proposition \ref{prop: delta}  to characterize irreducible $p$-exceptional curves with intersection $\le -3$. Note that the analogous statement for a $p$-exceptional $-2$-curve follows by Proposition \ref{prop: delta} (iii). 

\begin{prop} In the same assumptions and notation as in Lemma \ref{lem: Ei}, let $N$ be an irreducible curve  such that   $N^2=-m\leq - 3$; set $J=\{j\ |\ NE_j=1\}$.  

If $N$ is contracted by $p$ then:
\begin{enumerate}
\item $|J|=m-1$;
\item    $\Gamma:=N+\sum_{j\in J}E_j$   is one of the $E_i$;
\item  $\Gamma$ contains $N$ with multiplicity 1.
\end{enumerate}
\end{prop}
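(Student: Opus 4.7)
The plan is to prove (i) first, from which (ii) and (iii) follow quickly by invoking Proposition~\ref{prop: A}. Since $N$ is irreducible and contracted by $p$, it is a component of some $E_i$, hence by Lemma~\ref{lem: Ei} (ii) it is a smooth rational curve. Adjunction then gives $K_TN=m-2$, and combining this with $p^*K_W\cdot N=0$ yields the key identity
\[
\sum_i N\cdot E_i \;=\; m-2.
\]

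Next I would analyze each term $N\cdot E_i$ by distinguishing whether $N$ is a component of $E_i$ or not. If it is not, then Proposition~\ref{prop: A} (viii) (applicable since $N$, being irreducible, is $1$-connected) together with effectivity gives $N\cdot E_i\in\{0,1\}$, and these terms contribute exactly $|J|$ to the sum. If $N$ is a component of $E_i$, then Proposition~\ref{prop: A} (v) says the $E_i$'s containing $N$ form a chain $E_{i_1}<E_{i_2}<\cdots$; Proposition~\ref{prop: A} (vi) forces $N\cdot E_{i_k}=0$ for $k\ge 2$; and Proposition~\ref{prop: A} parts (i) and (ii) give $N\cdot E_{i_1}\in\{0,-1\}$, the value $-1$ occurring exactly when $N$ coincides with the distinguished component $\Gamma_{i_1}$ of $E_{i_1}$. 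Adding up contributions gives $|J|\in\{m-1,m-2\}$, so (i) is equivalent to ruling out $|J|=m-2$.

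The main obstacle is this exclusion. To handle it, I would set $\Gamma':=N+\sum_{j\in J}E_j$ and compute, using $E_iE_j=-\delta_{ij}$ together with the values of $N\cdot E_i$ already determined, that $K_T\Gamma'=0$, $\Gamma'^2=-2$, and crucially $\Gamma'\cdot E_i=0$ for every $i$ (for $i\in J$ the contribution $N\cdot E_i=1$ is cancelled by $E_i^2=-1$, while for $i\notin J$ one has $N\cdot E_i=0$ in this scenario). Since $N$ is an irreducible component of some $E_i$, the class $\Gamma'$ lies in the sublattice $L$ spanned by $E_1,\dots,E_s$ (equivalently, by the components of $\sum E_i$); vanishing against a basis of the negative-definite form on $L$ (Lemma~\ref{lem: Ei} (iv)) would then force $\Gamma'^2=0$, contradicting $\Gamma'^2=-2$. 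With (i) in hand, the computations $\Gamma^2=-m+|J|=-1$ and $K_T\Gamma=(m-2)-|J|=-1$ together with Proposition~\ref{prop: A} (vii) give $\Gamma=E_i$ for some $i$, proving (ii); and since $N\cdot E_j=1>0$ for $j\in J$, Proposition~\ref{prop: A} (i) shows $N$ is not a component of any such $E_j$, so $N$ appears with multiplicity exactly $1$ in $\Gamma$, proving (iii).
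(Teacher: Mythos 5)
Your proof is correct. It rests on the same numerical backbone as the paper's --- adjunction plus $p^*K_W\cdot N=0$ gives $\sum_i N E_i=m-2$, Proposition~\ref{prop: A}~(viii) gives $NE_i\le 1$, and Proposition~\ref{prop: A}~(vii) identifies $\Gamma$ with some $E_i$ --- but the key counting step is handled differently. The paper picks $m-2$ indices of $J$ (which exist since $|J|\ge m-2$ follows at once from the sum), forms the auxiliary divisor $\Delta=N+\sum_{i=1}^{m-2}E_{j_i}$ with $\Delta^2=-2$, $K_T\Delta=0$, and invokes Proposition~\ref{prop: delta}~(iii) to produce constructively the missing index $j_{m-1}$ and, simultaneously, the identity $E_{j_m}=\Delta+E_{j_{m-1}}=\Gamma$, which yields (i) and (ii) in one stroke. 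You instead pin down all the terms $NE_i$ (using parts (i), (ii), (v), (vi) of Proposition~\ref{prop: A} to control the indices for which $N$ is a component of $E_i$), conclude $|J|\in\{m-2,m-1\}$, and exclude $|J|=m-2$ by observing that $\Gamma'$ would then be orthogonal to the spanning set $E_1,\dots,E_s$ of the negative-definite lattice while having square $-2$; then (ii) needs a separate appeal to Proposition~\ref{prop: A}~(vii). Your nondegeneracy argument is essentially the same mechanism hiding inside the proof of Proposition~\ref{prop: delta}~(iii) (``there is at least an index $j$ with $\Delta E_j\ne 0$''), so what you have done is inline that lemma; the cost is a longer case analysis of the terms $NE_i$ (which the paper avoids entirely, since it only needs the lower bound $|J|\ge m-2$ up front), and the benefit is a self-contained argument that does not route through Proposition~\ref{prop: delta}. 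Your derivations of (ii) and (iii) from (i) are clean and match the paper's implicit reasoning for (iii).
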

\begin{proof} 
The curve $N$ is smooth rational by Lemma \ref{lem: Ei} (ii).   
Since $N^2=-m$,    we have $(\sum_{i=1}^s E_i)N=K_TN=m-2$ by adjunction. Since $NE_i\leq 1$ for all $i$  by  Proposition  \ref{prop: A} (viii), we have $|J|\ge (m-2)$. 
Pick $j_1< j_2<\dots <j_{m-2}\in J$ and set  $\Delta=N+\sum_{i=1}^{m-2} E_{j_i}$. One has  $\Delta^2= -2$, $K_T\Delta=0$ , so by Proposition \ref{prop: delta} (iii) there are  (distinct) indices  $j_{m-1}$ and $j_m$ with $E_{j_{m-1}} \Delta =1$ 
and  $E_{j_m}=\Delta +E_{j_{m-1}}$. Note that $E_{j_1}\Delta =\dots=E_{j_{m-2}}\Delta=0$, so that $j_{m-1}$  and $j_m$ are  distinct from $j_1,\dots j_{m-2}$. 
Since $E_{j_m}N=-1$ and $N E_k=0$ for $k\ne j_1, \dots j_m$, we have  $J=\{j_1,\dots j_{m-1}\}$.
\end{proof}

 \section{A-D-E configurations}\label{ADE}
 
 In this section we apply the results of the previous section to A-D-E configurations, that we now  introduce: 

\begin{defn}\label{def: ADE} An {\em  A-D-E configuration} of curves on a smooth projective surface $T$ is a reduced effective divisor $\Delta$ whose  components are $-2$-curves  and whose dual graph is a Dynkin diagram of type $A_n$ ($n\ge 1$), $D_n$ ($n\ge 4$)  or $E_n$ ($n=6,7,8$).  Note that one has  $K_T\Delta=0$ by assumption and that it is easy to show $\Delta^2=-2$ by induction on the number $n$ of components of $\Delta$.
\end{defn}

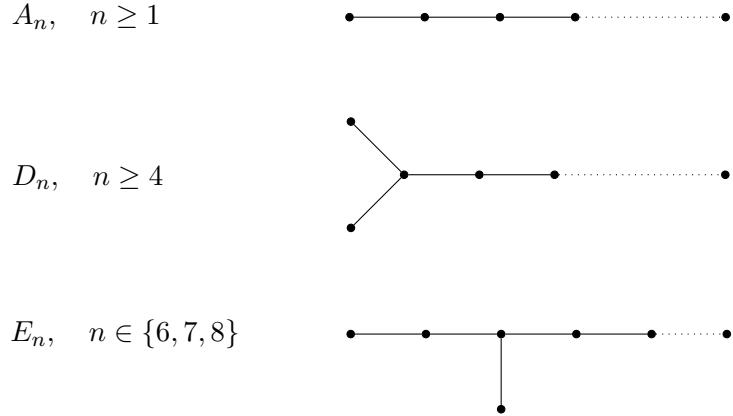
\begin{figure} [h!]
 \begin{tikzpicture}[main/.style = {draw,circle,fill=black, inner sep=1pt}]
 \node[main] (1) {};
 \node[main] (2) [right of=1] {};
 \node[main] (3) [right of=2] {};
 \node[main] (4) [right of=3] {};
 \node[main] (5) [right of=4, node distance=2cm] {};
 \draw (1) -- (2) node at (-3.5,0) {$A_n,\quad  n\geq 1$ } ;
  \draw (2) -- (3) ;
  \draw (3) -- (4) ;
  \draw[dotted] (4) -- (5);
\end{tikzpicture}

\vspace{1cm}

 \begin{tikzpicture}[main/.style = {draw,circle,fill=black, inner sep=1pt}]
 \node[main] (1) {};
 \node[main] (2) [below right of=1] {};
 \node[main] (3) [below left of=2] {};
 \node[main] (4) [right of=2] {};
 \node[main] (5) [right of=4] {};
 \node[main] (6) [right of=5, node distance=2.27cm] {};
 \draw (1) -- (2) node at (-3.5,-0.75) {$D_n,\quad n\geq 4$ };
  \draw (2) -- (3) ;
  \draw (2) -- (4);
  \draw (4) -- (5);
   \draw[dotted] (5) -- (6);
\end{tikzpicture}

\vspace{1cm}

 \begin{tikzpicture}[main/.style = {draw,circle,fill=black, inner sep=1pt}]
 \node[main] (1) {};
 \node[main] (2) [right of=1] {};
 \node[main] (3) [right of=2] {};
 \node[main] (4) [right of=3] {};
 \node[main] (5) [right of=4] {};
 \node[main] (6) [right of=5] {};
 \node[main] (7) [below of=3] {};
  \draw (1) -- (2) node at (-3,0) {$E_n,\quad n\in\{6,7,8\}$} ;
  \draw (2) -- (3) ;
  \draw (3) -- (4) ;
  \draw (4) -- (5) ;
  \draw[dotted] (5) -- (6);
   \draw (3) -- (7);
\end{tikzpicture}
 \caption{Dynkin diagrams.} \label{ADEgraphs}
 \end{figure}

\bigskip
It is  well known (cf. \cite{bpv}, III.3) that A-D-E configurations arise as exceptional divisors in the minimal resolution of canonical surface singularities; they are characterized  by the condition that  they are reduced connected  exceptional divisors such that  for any of their irreducible components $\Gamma$ one has  $K_T\Gamma=0$ (cf. \cite{bpv}, Prop. III.2.4).
 The  numerical conditions $K_T\Delta=0$ and $\Delta^2=-2$ are far from giving a characterization of A-D-E configurations, as shown by the example below:

\begin{ex} 
(a) The fundamental cycle of a canonical singularity $(X,p)$  (cf. \cite{bpv}, III.3) is a divisor $Z$ whose support is the exceptional divisor  of the minimal resolution $T\to  X$ and satisfies $K_TZ=0$ and $Z^2=-2$, but $Z$ is reduced only if $p$ is a singularity of type $A_n$.

(b) It is easy to produce divisors with $K_T\Delta=0$ and $\Delta^2=-2$ such that not all the components of $\Delta$ are $-2$-curves: just take an A-D-E configuration, blow it up at a  point and take its total transform on the blow up. 
\end{ex}
   
  The next result appears in \cite{spiva}, p.~426. 
  \begin{prop} \label{thm: An} Let  $p\colon  T\to W$ be a birational morphism of smooth surfaces and let $\Delta$ be an A-D-E configuration of $T$.
  
  If $\Delta$ is contracted by $p$ then it is of type $A_n$.
    \end{prop}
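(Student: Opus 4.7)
The plan is to suppose for contradiction that $\Delta$ is of type $D_n$ or $E_n$ and to exploit the strong constraints of Proposition~\ref{prop: delta}. Such a $\Delta$ has a trivalent vertex $\Gamma_0$ with three distinct neighbors $\Gamma_1,\Gamma_2,\Gamma_3$ in~$\Delta$, each itself an A-D-E configuration of type $A_1$. Applying Proposition~\ref{prop: delta}(iii) to each of $\Gamma_0,\Gamma_1,\Gamma_2,\Gamma_3$ yields indices $j_a,k_a$ (for $a\in\{0,1,2,3\}$) with $\Gamma_a E_{j_a}=1$, $\Gamma_a E_{k_a}=-1$, $\Gamma_a E_i=0$ for $i\ne j_a,k_a$, and $E_{k_a}=\Gamma_a+E_{j_a}$. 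Since $\Gamma_1,\Gamma_2,\Gamma_3$ are pairwise disjoint (in the Dynkin diagram they communicate only through~$\Gamma_0$), Proposition~\ref{prop: delta}(iv) gives $\Gamma_b E_{j_a}=0$ for distinct $a,b\in\{1,2,3\}$; hence the indices $j_1,j_2,j_3$ are pairwise distinct.

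Next I determine $\Gamma_0 E_{j_a}$ for $a\in\{1,2,3\}$. It lies in $\{-1,0,1\}$ by Proposition~\ref{prop: delta}(iii) for~$\Gamma_0$; the value~$1$ would force $j_a=j_0$ and hence $(\Gamma_0+\Gamma_a)E_{j_0}=2$, contradicting Proposition~\ref{prop: A}(viii) applied to the $1$-connected contracted divisor $\Gamma_0+\Gamma_a$ (an $A_2$-configuration, $1$-connected by Proposition~\ref{prop: delta}(i)). So $\Gamma_0 E_{j_a}\in\{-1,0\}$, and the identity $\Gamma_0 E_{k_a}=\Gamma_0\Gamma_a+\Gamma_0 E_{j_a}=1+\Gamma_0 E_{j_a}$ splits each $a$ into one of two cases: \emph{Case~(A)}, where $\Gamma_0 E_{j_a}=0$ and $\Gamma_0 E_{k_a}=1$, so that $k_a=j_0$ and $E_{j_0}=\Gamma_a+E_{j_a}$; or \emph{Case~(B)}, where $\Gamma_0 E_{j_a}=-1$, so that $j_a=k_0$.

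A pigeonhole argument closes the proof: if two indices $a,b\in\{1,2,3\}$ were both in Case~(A), then $\Gamma_a$ and $\Gamma_b$ would both be components of $E_{j_0}$ with intersection $-1$, contradicting the uniqueness statement in Proposition~\ref{prop: A}(ii); and if they were both in Case~(B), then $j_a=j_b=k_0$, contradicting the distinctness of the~$j_a$'s. Hence at most $1+1=2$ of the three indices $a=1,2,3$ can be accommodated in Cases~(A) and~(B), the desired contradiction.

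The main obstacle in this plan is recognizing that the three legs meeting at the trivalent vertex~$\Gamma_0$ are forced into only two available ``slots'' $\{j_0,k_0\}$ picked out by~$\Gamma_0$, through the combined use of Propositions~\ref{prop: delta}(iii)--(iv) and~\ref{prop: A}(viii); once this channeling is set up, the uniqueness of the negatively intersecting component in Proposition~\ref{prop: A}(ii) together with the distinctness of the~$j_a$'s makes the pigeonhole finish immediate.
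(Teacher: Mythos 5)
Your argument is correct: every step checks out, including the use of Proposition~\ref{prop: delta}(iii) on the central curve and each leg, the distinctness of $j_1,j_2,j_3$ via Proposition~\ref{prop: delta}(iv), the exclusion of $\Gamma_0E_{j_a}=1$ via Proposition~\ref{prop: A}(viii) applied to the $1$-connected divisor $\Gamma_0+\Gamma_a$, and the final pigeonhole using the uniqueness in Proposition~\ref{prop: A}(ii). The overall strategy --- isolating the trivalent vertex and the distinguished exceptional divisor meeting it with intersection $1$, then showing the three legs cannot all be accommodated --- is the same as the paper's, but the endgame differs. The paper first reduces to a $D_4$ subconfiguration, invokes Proposition~\ref{prop: A}(ii) only once to conclude that at least two legs $N_1,N_2$ satisfy $N_iE_0\ge 0$, and then finishes in one line by computing $\bigl(2(E_0+N_0)+N_1+N_2\bigr)^2\ge 0$, contradicting negative definiteness (Lemma~\ref{lem: Ei}(iv)). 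Your version avoids that explicit divisor and instead channels each leg into one of the two slots $j_0,k_0$ attached to the central curve, closing with a counting contradiction; this costs a longer case analysis and heavier reliance on Proposition~\ref{prop: delta}, but in exchange it records precisely how each leg must sit relative to $E_{j_0}$ and $E_{k_0}$, which is structural information the paper's quadratic-form computation does not provide.
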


  \begin{proof} We use the notation for the $p$-exceptional divisors introduced in Lemma \ref{lem: Ei}.
  
   Since every A-D-E configuration not of type $A_n$ contains a subconfiguration of type $D_4$, it suffices to rule out $D_4$. So assume by contradiction  that $\Delta$ is of type $D_4$ and denote 
 by $N_0$ the central component and by $N_1, N_2,N_3$ the remaining ones. By Proposition \ref{prop: A} (iii) there is precisely one of the $E_i$ that has intersection $1$ with $N_0$. Call it $E_0$. Now note that at most one of the $N_i$ with $i>0$ satisfies $N_iE_0=-1$ (because  by Proposition \ref{prop: A} (ii) there is a unique component of $E_0$ that intersects $E_0$ negatively).   Then, say, $N_1 E_0\geq 0$, $N_2 E_0\geq 0$  and so $N_1(E_0+N_0)\geq 1 $,  $ N_2(E_0+N_0)\geq 1$.   Then $(2( E_0+N_0)+N_1+N_2)^2\ge 0$ and this is a contradiction to negative definiteness (Lemma \ref{lem: Ei} (iv)). 
 
\end{proof}

 \begin{prop}\label {prop: theta}Let  $p\colon  T\to W$ be a birational morphism of smooth surfaces and $\Delta, \Delta'$ be A-D-E configurations of type $A_n$   contracted by $p$.  Then (keeping the notation for the $p$-exceptional divisors introduced in Lemma \ref{lem: Ei}):
  \begin{enumerate}

  \item  if $\Delta E_j=1$ then $\Delta$ and $E_j$ have no common components (and so intersect transversely in one point);
  
  \item   if $\Delta E_j=1$ then the unique component $\theta_{\Delta}$ of $E_j$ such  that $\theta_{\Delta} \Delta=1$ is also the unique component of $E_j$ such that  $\theta_{\Delta}E_j=-1$ (see Proposition \ref{prop: A} (ii));
 
  \item  if $\Delta E_j=1$ and $\Delta\cap \Delta'=\emptyset$  then  either $\Delta'$ is disjoint from $E_j$ or $\Delta'<E_j$;

  \item  if $\Delta\cap \Delta'=\emptyset$  then $\theta_{\Delta}\neq \theta_{\Delta'}$.
\end{enumerate}

  \end{prop}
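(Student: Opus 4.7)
My plan is to exploit two observations repeatedly. First, by Proposition \ref{prop: delta} (iii) there is an index $k$ with $E_k=E_j+\Delta$, hence $E_j<E_k$ and Proposition \ref{prop: A} (vi) yields $\Gamma E_k=0$ for every component $\Gamma$ of $E_j$. Second, Proposition \ref{prop: delta} (iv) applied to a single $-2$-curve $M$ disjoint from $\Delta$ yields $M E_j=0$. These two observations organise the proof of all four parts.

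For (i), I argue by contradiction. Let $N$ be a common component of $\Delta$ and $E_j$; since it lies in $\Delta$, $N$ is a $-2$-curve. The first observation gives $0=N E_k=N E_j+N\Delta$, where $N E_j\le 0$ by Proposition \ref{prop: A} (i) and $N\Delta\in\{-1,0\}$ according as $N$ is a terminal or internal node of the chain $\Delta$. The terminal case forces $N E_j=1$, impossible. In the internal case $N E_j=0$, and I split $\Delta=\Delta_1+N+\Delta_2$ into the two non-empty $A$-subchains flanking $N$. From $\Delta E_j=\Delta_1 E_j+\Delta_2 E_j=1$ and the fact that $\Delta_a E_j\in\{-1,0,1\}$ (Proposition \ref{prop: delta} (iii) applied to each $\Delta_a$), exactly one, say $\Delta_1$, satisfies $\Delta_1 E_j=1$; hence $E_{k_1}=E_j+\Delta_1$ for some index $k_1$. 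Now $N$ lies in $E_j<E_{k_1}$, so the first observation gives $N E_{k_1}=0$, while a direct computation yields $N E_{k_1}=N E_j+N\Delta_1=0+1=1$, a contradiction. This is the main technical step.

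Part (ii) follows immediately from the first observation: $\Gamma\Delta=-\Gamma E_j$ for every component $\Gamma$ of $E_j$, so $\Gamma_j\Delta=1$ (where $\Gamma_j$ denotes the component of $E_j$ given by Proposition \ref{prop: A} (ii), satisfying $\Gamma_j E_j=-1$), while every other such $\Gamma$ satisfies $\Gamma\Delta=0$; consequently $\theta_\Delta=\Gamma_j$. For (iii), I apply the second observation to each component $M$ of $\Delta'$: the hypothesis $\Delta\cap\Delta'=\emptyset$ gives $M\Delta=0$, hence $M E_j=0$. Therefore $M$ is either a component of $E_j$ or set-theoretically disjoint from $E_j$. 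A connectedness argument on the chain $\Delta'$ then rules out the coexistence of the two alternatives (two adjacent components of $\Delta'$ falling into different cases would share an intersection point that must simultaneously lie on $E_j$ and off it), giving either $\Delta'<E_j$ or $\Delta'\cap E_j=\emptyset$.

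For (iv), I suppose $\theta_\Delta=\theta_{\Delta'}$. By (ii) this common component equals both $\Gamma_j$ and $\Gamma_{j'}$, where $\Delta E_j=\Delta' E_{j'}=1$. If $j\neq j'$, then Proposition \ref{prop: A} (v)--(vi) applied to $E_j$ and $E_{j'}$ (which share this component) would force $\Gamma_{j'} E_{j'}=0$, contradicting $\Gamma_{j'} E_{j'}=-1$. Hence $j=j'$, so $\Delta' E_j=1$; but Proposition \ref{prop: delta} (iv) gives $\Delta' E_j=0$, a contradiction. The main obstacle is the internal-node case of (i), where a fresh application of Proposition \ref{prop: delta} (iii) to a sub-chain is needed to produce the tower $E_j<E_{k_1}$ whose vanishing constraint clashes with the explicit count $N\Delta_1=1$.
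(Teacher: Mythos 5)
Your argument is correct, and for parts (i) and (iii) it takes a genuinely different route from the paper's. For (i) the paper writes $E_j=A+B$, $\Delta=A+C$ with $A$ the whole common part and derives $A^2\ge -1$ from $BE_k\le 0$, $BC\ge 0$ and negative definiteness, contradicting the fact that $A^2$ is negative and even; you instead fix a single common component $N$, use that $\Delta$ is a \emph{reduced $A_n$-chain} to read off $N\Delta$, and manufacture a second tower $E_j<E_{k_1}=E_j+\Delta_1$ whose vanishing from Proposition \ref{prop: A} (vi) clashes with $N\Delta_1=1$ — a correct and rather clean use of the chain structure (only note that for $n=1$ a ``terminal'' $N$ has $N\Delta=-2$ rather than $-1$, which makes the contradiction $NE_j>0$ even more immediate). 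For (iii) the paper again decomposes $E_j=A+B$, $\Delta'=A+C$ and runs a parity/negative-definiteness computation on $(E_j-\Delta')^2=-3$ together with the $1$-connectedness of $E_j$; you instead apply Proposition \ref{prop: delta} (iv) to each individual component $M$ of $\Delta'$ to get $ME_j=0$, so that each $M$ is either a component of $E_j$ or disjoint from its support, and connectedness of $\Delta'$ forbids mixing the two alternatives. This is shorter, at the price of using reducedness and connectedness of $\Delta'$, whereas the paper's computation would apply to a more general effective $\Delta'$ with $K_T\Delta'=0$, $\Delta'^2=-2$ whose components are $-2$-curves. Parts (ii) and (iv) are essentially the paper's arguments: in (ii) you use the stronger input $\Gamma E_k=0$ (Proposition \ref{prop: A} (vi)) instead of $\Gamma E_k\le 0$, which has the bonus of establishing existence and uniqueness of $\theta_\Delta$ without invoking (i); in (iv) you reach the same contradiction via Proposition \ref{prop: A} (v)--(vi) and Proposition \ref{prop: delta} (iv) without routing through (iii).
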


    \begin{proof}
    We use again the notation for the $p$-exceptional divisors introduced in Lemma \ref{lem: Ei} and we recall  that by Proposition \ref{prop: delta} if $\Delta E_j=1$ then $E_k:=\Delta+E_j$ is one of the $E_i$.

 (i)   Write $E_j=A+B$, $\Delta=A+C$ where $A,B,C$ are effective divisors such that $B$ and $C$ have no common components (and so $BC\geq 0$). 
 
 Suppose that $A\neq 0$. Since $AE_j\leq 0$ by Proposition \ref{prop: A} (i)  and $\Delta E_j=1$, $C\neq 0$. On  the other hand since $K_TE_j=-1$ and by hypothesis $K_TA=0 $, $B$ is not the zero divisor.
 Since  $E_k=E_j+\Delta=2A+B+C$ is one of the $E_i$,  one has   $BE_k\leq 0$, by Proposition \ref{prop: A} (i).  Then $BC\geq 0$ implies $B^2+2AB\leq 0$.  From $-1=E_j^2=A^2+2AB+B^2$ we obtain $A^2\geq -1$, which is impossible by the adjunction formula  because $A^2<0$ and $K_TA=0$.  So $A=0$.
 
  Since $\Delta E_j=1$ and $\Delta$ and $E_j$ have no common components  they intersect transversely in one point. 
 \medskip 
 
(ii)  Let $\theta_{\Delta} $ be the unique component of $E_j$ such that $\theta_{\Delta} \Delta=1$. Consider as above $E_k=\Delta+E_j$. Since   $\theta_{\Delta} E_k\leq 0$  (cf. Proposition \ref{prop: A} (i)) and  $\theta_{\Delta}\Delta=1$, necessarily $\theta_{\Delta}E_j=-1$ (cf. Proposition \ref{prop: A} (ii)).
\medskip
 
 (iii)  Write $E_j=A+B$, $\Delta'=A+C$ where $A,B,C$ are effective divisors such that $B$ and $C$ have no common components (and so $BC\geq 0$).  Note that $B\neq 0$  and $B^2$ is odd because $KE_j=-1$, whilst by hypothesis every component  $\Gamma$ of $\Delta'$ satisfies $K_T\Gamma=0$. 
  Note also that  because $\Delta$ and $\Delta'$ are disjoint the unique component $\theta_{\Delta} $  of $E_j$ such that $\theta_{\Delta} \Delta=1$ is a component of $B$ and therefore $BE_j=-1$, by (ii)  and  by Proposition \ref{prop: A}. 
    
     If  $A=0$  then $E_j$ and $\Delta'$ are disjoint, since $\Delta'E_j=0$ by Proposition \ref{prop: delta} (iv). On the other hand  if $C=0$, $\Delta'<E_j$.  
     
     So assume $A\neq 0$ and $C\neq 0$.   Then $-3=(E_j-\Delta')^2=(B-C)^2=B^2+C^2-2BC$.  By negative definiteness  $B^2<0$, $C^2<0$, and  because $B^2$ is odd the only possibility is $BC=0$, $B^2=-1$ and $C^2=-2$.  But then, since  by Proposition \ref{prop: A} (iii),  $E_j$ is 1-connected $BA\geq 1$ implies $BE_j\ge 0$ contradicting $BE_j=-1$.

 \medskip
 (iv)  By (iii) either $\Delta'$ is disjoint from $E_j$ in which case $\theta_{\Delta}\neq \theta_{\Delta'}$ by the definition of $\theta_{\Delta'}$, or $\Delta'<E_j$.  In the latter case let $E_h$ be the unique $E_i$ such that $\Delta' E_i=1$. Note that by Proposition \ref{prop: delta} (iv), $E_h\neq E_j$.
 
  Suppose that $\theta_{\Delta}=\theta_{\Delta'}$. Then $E_j$ and $E_h$ have common components and so by   Proposition \ref{prop: A} (v) either $E_j<E_h$ or $E_h<E_j$.  Since  by (ii) $\theta_{\Delta}E_j=\theta_{\Delta}E_h=-1$ we have a contradiction  to   Proposition \ref{prop: A} (vi).
   \end{proof}

 \begin{cor} \label{cor: number}  Let  $p\colon  T\to W$ be a birational morphism of smooth surfaces and let  $\Delta_{1},....,  \Delta_{k}$  be  $k$ disjoint A-D-E configurations of type $A_{m_i}$  contracted by $p$.   Then   $\sum_{i=1}^k (m_i+1)\leq s$  where  $s:=K_W^2-K_T^2 $ is the number of blowups from $W$ to $T$.

In particular the number $k$ of disjoint irreducible $-2$-curves (i.e  A-D-E configurations of type $A_1$) contracted by $p$ satisfies $2k\leq s$.
\end{cor}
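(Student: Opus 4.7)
The plan is to count distinct irreducible $p$-exceptional curves on $T$. Lemma \ref{lem: Ei}(iv) together with the relations $E_iE_j=-\delta_{ij}$ shows that $E_1,\dots,E_s$ form a basis of the subgroup $L\subset \Pic(T)$ generated by all irreducible $p$-exceptional curves; since those irreducible curves themselves generate $L$ and $L$ is free of rank $s$, there are exactly $s$ irreducible $p$-exceptional curves on $T$. The strategy is then to exhibit $\sum_{i=1}^{k}(m_i+1)$ pairwise distinct such curves, which forces the desired inequality.

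First, I account for $\sum_{i=1}^{k} m_i$ of them: by definition each $\Delta_i$ is a reduced sum of $m_i$ distinct $-2$-curves contracted by $p$, and the disjointness hypothesis guarantees that these components are pairwise distinct across all $i$. To produce the remaining $k$ curves, for each $i$ I apply Proposition \ref{prop: delta}(iii) to obtain an index $j_i$ with $\Delta_iE_{j_i}=1$, and then Proposition \ref{prop: theta}(ii) to extract the distinguished irreducible component $\theta_{\Delta_i}$ of $E_{j_i}$ characterized by $\theta_{\Delta_i}\Delta_i=1$. By Proposition \ref{prop: theta}(iv) the curves $\theta_{\Delta_1},\dots,\theta_{\Delta_k}$ are pairwise distinct.

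The heart of the proof is to check that no $\theta_{\Delta_i}$ coincides with a component of any $\Delta_l$. For $l=i$ this is immediate from Proposition \ref{prop: theta}(i), since $\theta_{\Delta_i}$ is a component of $E_{j_i}$ and $\Delta_i$ shares no component with $E_{j_i}$. For $l\ne i$ I argue by contradiction: if $\theta_{\Delta_i}$ were a component of $\Delta_l$, then the hypothesis $\Delta_i\cap\Delta_l=\emptyset$ would force $\theta_{\Delta_i}$ to be disjoint from $\Delta_i$, giving $\theta_{\Delta_i}\Delta_i=0$, contradicting $\theta_{\Delta_i}\Delta_i=1$. This short disjointness argument, though elementary, is the main (and essentially the only) subtle point of the proof, being the one place where the hypothesis $\Delta_i\cap\Delta_l=\emptyset$ across different configurations is used. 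Combining the three observations yields $\sum_{i=1}^{k}(m_i+1)$ pairwise distinct irreducible $p$-exceptional curves, hence $\sum_{i=1}^{k}(m_i+1)\le s$. The final assertion about disjoint $-2$-curves is the special case $m_i=1$ for all $i$.
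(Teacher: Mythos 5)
Your proof is correct and follows essentially the same route as the paper's: both count the $m_i$ components of each $\Delta_i$ together with the distinguished curves $\theta_{\Delta_i}$, use Proposition \ref{prop: theta}(iv) for their distinctness, and compare with the total number $s$ of irreducible $p$-exceptional curves. You merely spell out two points the paper leaves implicit (that no $\theta_{\Delta_i}$ is a component of any $\Delta_l$, and why the exceptional locus has exactly $s$ components --- for the latter, note that the bound ``at most $s$'' comes from negative definiteness forcing linear independence of the irreducible exceptional curves, not from the fact that they generate $L$).
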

 
  \begin{proof}   For each $\Delta_i$, $i=1,...,k$  let  $\theta_{\Delta_i}$ be as in Proposition \ref{prop: theta} (ii). Since each $\Delta_i$ has exactly $m_i$ components   and the curves $\theta_{\Delta_i}$  are all distinct  the exceptional locus of $p$ has at least $\sum_{i=1}^k (m_i+1)$ irreducible components and so  $\sum_{i=1}^k (m_i+1)\leq s$.   \end{proof}

 \medskip
 
 \begin{prop} \label{prop: nef} Let  $p\colon  T\to W$ be a birational morphism of smooth surfaces and let $\Delta$ be an A-D-E configuration on $T$.  
 If $K_W$ is nef,  then one of the following holds:
 \begin{itemize}
 \item[\rm (a)] $\Delta$ is mapped to a point by $p$; 
 \item[\rm (b)] $\Delta$ is disjoint from all the $p$-exceptional curves. 
 \end{itemize}
   \end{prop}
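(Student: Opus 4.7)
My plan is to establish a dichotomy for the irreducible components of $\Delta$: every component that is \emph{not} $p$-exceptional must be disjoint from the whole $p$-exceptional locus. Once this is proved, the connectedness of the Dynkin diagram of $\Delta$ immediately produces the two alternatives (a) and (b).

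The heart of the argument is the following calculation. Let $N$ be a component of $\Delta$, so that $N^{2}=-2$ and $K_{T}N=0$. Writing $K_{T}=p^{*}K_{W}+\sum_{i=1}^{s}E_{i}$ as in Lemma \ref{lem: Ei}, adjunction gives
\[
0=K_{T}N=p^{*}K_{W}\cdot N+\sum_{i=1}^{s}E_{i}\cdot N.
\]
If $N$ is not $p$-exceptional, then $p_{*}N$ is a nonzero effective divisor on $W$, so the projection formula together with the nefness of $K_{W}$ yields $p^{*}K_{W}\cdot N\ge 0$, and therefore $\sum_{i}E_{i}\cdot N\le 0$. On the other hand, since $N$ is not a component of any $E_{i}$, each irreducible component $\Gamma$ of the $p$-exceptional locus satisfies $\Gamma\cdot N\ge 0$, and hence $E_{i}\cdot N\ge 0$ for every $i$. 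Combining the two bounds forces $\Gamma\cdot N=0$ for every $p$-exceptional curve $\Gamma$.

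To finish the proof I would suppose that $\Delta$ is not entirely contracted by $p$. Then $\Delta$ has at least one non-$p$-exceptional component, and by the previous step no such component meets any curve of the $p$-exceptional locus; in particular, no non-$p$-exceptional component of $\Delta$ meets a $p$-exceptional component of $\Delta$. Because the dual graph of $\Delta$ is connected, the only possibility is that \emph{every} component of $\Delta$ is non-$p$-exceptional (otherwise one could walk along the graph to find two adjacent components of opposite types, whose intersection number is $1$). Applying the key step to each component then yields alternative (b). Alternative (a) covers the remaining case, in which all components of $\Delta$ are $p$-exceptional and therefore, by the connectedness of $\Delta$, collapse to a single point.

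The only delicate point is the inequality $E_{i}\cdot N\ge 0$, which relies entirely on $N$ not being a component of $E_{i}$; this is what couples the nefness of $K_{W}$ on $W$ to the combinatorics of the exceptional divisor on $T$, and it is the unique place where the hypothesis $K_{W}$ nef actually enters.
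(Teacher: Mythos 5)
Your proof is correct and follows essentially the same route as the paper: the key computation $0=K_TN=p^*K_W\cdot N+\bigl(\sum_i E_i\bigr)\cdot N$ with both summands non-negative for a non-contracted component $N$, followed by the connectedness of $\Delta$ to force the dichotomy, is exactly the argument given there.
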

\begin{proof}
We keep using the notation for the $p$-exceptional divisors introduced in Lemma \ref{lem: Ei}.  

Let $N$ be a $-2$-curve. If $N$ is not a component of $\sum_{i=1}^s E_i$, i.e., if $N$ is not contracted by $p$, then $N\sum_{i=1}^s E_i\geq 0$. 
Since $K_W$ is nef,  the equality  $0=K_TN=(p^*K_W)N+(\sum_{i=1}^s E_i) N$ implies $(\sum_{i=1}^s E_i) N=0$ and so  $N$ is disjoint from the support of $\sum_{i=1}^s E_i$. Since the support of $\sum_{i=1}^s E_i$ contains all $p$-exceptional curves, the claim is proven in this case.

In general, given an A-D-E configuration $\Delta$, we can write it as $\Delta_1+\Delta_2$, where $\Delta_1$ is the sum of all components of $\Delta$ that are contracted by $p$ and $\Delta_2=\Delta-\Delta_1$. The divisors $\Delta_1$ and $\Delta_2$ are disjoint  by the first part of the proof, but $\Delta$ is connected, so either $\Delta=\Delta_1$ (and we are in case (a)) or $\Delta=\Delta_2$ (and we are in case (b)). 
\end{proof}
\begin{remark}   If in $\mathbb P^2$ we blow up three points on a line and then blow up  the three exceptional divisors in one point each we obtain a $D_4$ configuration with common components with $\sum_{i=1}^s E_i$ but not contained in $\sum_{i=1}^s E_i$. So Proposition  \ref{prop: nef} fails if $K_W$ is not nef. 
\end{remark}

\section{Surfaces with canonical singularities}

Let $(Y,q)$ be a (germ of) canonical surface singularity and let $\Delta_q$ be the exceptional divisor of the minimal resolution; $\Delta_q$ is an A-D-E configuration. It is well known that $(Y,q)$ is analytically isomorphic to the quotient singularity $(\C^2/G,0)$, where $G$ is a finite subgroup of $SL(2,\C)$. Since $SL(2,\C)$ acts freely on $\C^2\setminus\{0\}$, $G$ is isomorphic to the local fundamental group of $(Y,q)$. Following \cite{miyaoka}, we attach to $(Y,q)$ the following numerical invariant:
$$\nu(q)=e(\Delta_q)- \frac{1}{|G|}.$$
Since the dual graph of $\Delta_q$ is a tree and all the irreducible components of $\Delta_q$ are isomorphic to $\mathbb P^1$ the Euler  number $e(\Delta_q)$ is $n+1$,where $n$ is the number of components of $\Delta_q$.  One has the following (\cite{br}, also \cite{bu}, \cite {r2}): 

\bigskip
\renewcommand{\arraystretch}{1.5}
\begin{tabular}{|l|c|c|c|}
    \hline
    A-D-E & $G$&$ |G| $ & $ \nu(q) $\\
    \hline
   $ A_n$ &cyclic& $n+1$  & $n+1 - \frac{1}{n+1}$ \\
    \hline
    $D_n$ &binary dihedral&$4(n-2)$  & $n+1 - \frac{1}{4(n-2)}$ \\
    \hline
   $E_6$ &binary tetrahedral & $24$ & $7- \frac{1}{24}$\\
     \hline
    $E_7$ &binary octahedral group&  $48$ & $8- \frac{1}{48}$\\
     \hline
    $E_8$ &binary icosahedral group & $120$ & $9- \frac{1}{120}$\\
      \hline
\end{tabular}
\renewcommand{\arraystretch}{1}

\bigskip

Let $X$ be a normal  surface with canonical singularities $q_1,\dots q_h$ and smooth elsewhere;  if $K_X$ is nef (and hence $\kappa(X)\ge 0$),  then by Corollary 1.3 of \cite{miyaoka} (see also \cite{hi}) one has
\begin{equation}\label{eq: ineq}
\sum_{i=1}^h \nu(q_i) \le 12\chi(\OO_X)-\frac 4 3K_X^2.
\end{equation}

 Using the results of \S \ref{ADE}, we extend  the result  to the case  $\kappa(X)\ge 0$ (and $K_X$ possibly not nef):
\begin{thm}\label{thm: ineq}
Let $X$ be a surface with canonical singularities $q_1,\dots q_h$  and smooth elsewhere.  Let $f\colon T\to X$ be the minimal resolution  and  let $p\colon T\to W$ be the morphism to the minimal model.

 If $\kappa(X)\ge 0$, then
$$ \sum_{i=1}^h\nu(q_i)\le 12\chi(\OO_X)-\frac 4 3K_X^2-\frac{s}{3}$$ where $s=K^2_W-K^2_T$  is the number of blow ups from $W$ to $T$.
  
  In particular, if \ $\sum_{i=1}^h\nu(q_i)= 12\chi(\OO_X)-\frac 4 3K_X^2$, then $K_X$ is nef. 
 \end{thm}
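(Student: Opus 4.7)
The plan is to reduce, via a suitable contraction of A-D-E configurations on $W$, to Miyaoka's inequality \eqref{eq: ineq} applied to a normal surface $Y$ with canonical singularities and $K_Y$ nef.

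Since $\kappa(X) = \kappa(T) \ge 0$ and $W$ is the minimal model of $T$, the canonical class $K_W$ is nef. Let $\Delta_{q_i} \subset T$ denote the exceptional A-D-E configuration of $f$ over $q_i$. By Proposition \ref{prop: nef}, each $\Delta_{q_i}$ is either contracted by $p$ or is disjoint from every $p$-exceptional curve. Let $I$ be the set of indices of the second kind. For $i \in I$, the restriction $p|_{\Delta_{q_i}}$ is an isomorphism onto an A-D-E configuration on $W$ of the same Dynkin type, and these images are pairwise disjoint. Contracting them simultaneously produces a normal surface $Y$ with canonical singularities $q'_i$ ($i\in I$) of the same analytic type as the corresponding $q_i$; because $K_W$ is nef and orthogonal to every contracted configuration, $K_Y$ is nef as well.

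Since canonical singularities are rational, $\chi(\OO_X) = \chi(\OO_T) = \chi(\OO_W) = \chi(\OO_Y)$, and since the resolutions $T\to X$ and $W\to Y$ are crepant, $K_X^2 = K_T^2$ and $K_Y^2 = K_W^2 = K_T^2 + s = K_X^2 + s$. Applying \eqref{eq: ineq} to $Y$ yields
$$\sum_{i\in I}\nu(q_i) \;\le\; 12\chi(\OO_X) - \frac{4}{3}K_X^2 - \frac{4}{3}s.$$
For $i \notin I$, $\Delta_{q_i}$ is contracted by $p$, so by Proposition \ref{thm: An} it is of type $A_{n_i}$ and $\nu(q_i) = n_i+1-\frac{1}{n_i+1} < n_i+1$. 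Since the $\Delta_{q_i}$ with $i\notin I$ are pairwise disjoint, Corollary \ref{cor: number} gives $\sum_{i\notin I}(n_i+1) \le s$, hence $\sum_{i\notin I}\nu(q_i) < s$. Adding this to the displayed estimate yields the claimed inequality. For the \emph{in particular} statement, equality $\sum\nu(q_i) = 12\chi(\OO_X)-\frac{4}{3}K_X^2$ forces $s\le 0$, hence $s=0$ and $T=W$; then $K_T = f^*K_X$ is nef, so $K_X$ is nef by the projection formula.

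The main obstacle, I think, is the construction of $Y$: one must verify that the images $p(\Delta_{q_i})$ for $i\in I$ genuinely form disjoint A-D-E configurations on $W$ and that the resulting contraction produces only canonical singularities of the expected types, with $K_Y$ nef. The disjointness guaranteed by Proposition \ref{prop: nef} is precisely what makes this work, ensuring that $p$ is a local isomorphism near each such $\Delta_{q_i}$ so the Dynkin type (and the analytic type) are preserved; nefness of $K_Y$ then follows because $W\to Y$ is crepant.
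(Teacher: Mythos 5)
Your proposal is correct and follows essentially the same route as the paper: split the configurations according to Proposition \ref{prop: nef}, apply Miyaoka's inequality \eqref{eq: ineq} to the surface obtained by contracting the configurations that survive on $W$, and bound the contribution of the configurations contracted by $p$ via Proposition \ref{thm: An} and Corollary \ref{cor: number}. The only difference is that you spell out the construction of the auxiliary surface $Y$ (which the paper leaves implicit), and your justification of that step is sound.
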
 
 \begin{proof}
 For $i=1,\dots h$ let $\Delta_i$  be the exceptional divisor mapping to $q_i$ via $f$.  Then  $\Delta_1,\dots \Delta_h$ are disjoint   A-D-E configurations.  Since $\kappa(X)\ge 0$,  $K_W$ is nef and so  by Proposition \ref{prop: nef}  each $\Delta_i$   is either

 \begin{itemize}
 \item[\rm (a)] mapped to a point by $p$ or
 \item[\rm (b)]  mapped to an A-D-E configuration of $W$. 
 \end{itemize}

 Let $\Delta_1,...,\Delta_k$ be the  A-D-E configurations  in case (a) and $\Delta_{k+1}, ...,\Delta_h$ be the  A-D-E configurations  in case (b) .

 By  Miyaoka's inequality    \eqref{eq: ineq}   $$  \sum_{i=k+1}^h\nu(q_i)\le 12\chi(\OO_X)-\frac 4 3K_W^2. $$
 
 Since $K_W^2=K_T^2+s$ and $K_X^2=K_T^2$ the above inequality can be written $$  \sum_{i=k+1}^h\nu(q_i)\le 12\chi(\OO_X)-\frac 4 3K_X^2 -\frac 4 3s. $$
 
 On the other hand  by  Proposition \ref{thm: An}  each $\Delta_i$, $i=1,...,k$  is of type $A_{m_i}$. Since $\nu(q_i)=m_i+1-\frac{1}{m_i+1}$,   by Corollary \ref{cor: number} one has $$  \sum_{i=1}^k\nu(q_i)\le s-\sum_{i=1}^k\frac{1}{m_i+1}.$$

  Hence    $$  \sum_{i=1}^h\nu(q_i)\le 12\chi(\OO_X)-\frac 4 3K_X^2 -\frac 4 3s +s-\sum_{i=1}^k\frac{1}{m_i+1}\leq 12\chi(\OO_X)-\frac 4 3K_X^2 -\frac s 3.$$  
\end{proof}

\begin{remark} \label{langer}

 Generalizations  of Miyaoka's inequality    \eqref{eq: ineq}  for log canonical surface pairs  were studied first by Megyesi and then  by Langer.   For the case we consider in Theorem \ref{thm: ineq},  and using the notation therein, their results   (\cite[Theorem 0.1 (i)]{megyesi}, \cite [Corollary 0.2]{langer1}, \cite [Corollary 5.2]{langer})  all yield  the inequality: 
 
  $$  \sum_{i=1}^h\nu(q_i)\le 12\chi(\OO_X)-\frac 4 3K_X^2 -\frac {s}{12},$$
 weaker than Theorem \ref{thm: ineq}.

\end{remark}

\bigskip

\bigskip

\begin{minipage}{13.0cm}
\parbox[t]{6.5cm}{Vicente Lorenzo\\
Departamento de  Matem\'atica\\
Instituto Superior de Economia\\ e Gest\~ao\\
Universidade  de Lisboa\\
R. do Quelhas\\
1200-781 Lisboa, Portugal\\
vlorenzogarcia@gmail.com
 } \hfill
\parbox[t]{5.5cm}{Margarida Mendes Lopes\\
Departamento de  Matem\'atica\\
Instituto Superior T\'ecnico\\
Universidade de Lisboa\\
Av.~Rovisco Pais\\
1049-001 Lisboa, Portugal\\
mmendeslopes@tecnico.ulisboa.pt
}

\vskip1.0truecm

\parbox[t]{5.5cm}{Rita Pardini\\
Dipartimento di Matematica\\
Universit\`a di Pisa\\
Largo B. Pontecorvo, 5\\
56127 Pisa, Italy\\
rita.pardini@unipi.it}
\end{minipage}


\begin{thebibliography}{ABCD}
\bibitem[BHPV]{bpv}  W.~Barth,  K.~Hulek, C.~Peters, A.~Van De Ven, {\em Compact complex surfaces, 2nd enlarged ed.}
Ergebnisse der Mathematik und ihrer Grenzgebiete, {\bf 3.} Folge, Band {\bf 4},
Springer-Verlag, Berlin (2004).

\bibitem[Br]{br} E.~Brieskorn,  {\em  Rationale Singularitäten komplexer Flächen}, Invent. Math. 4 (1968), 336--358.

\bibitem[Bu]{bu} I.~Burban,  {\em  DuVal singularities}, teaching notes,http://www.mi.uni-koeln.de/~burban/singul.pdf

\bibitem[CFM]{cfm} 
C. Ciliberto, P. Francia, M. Mendes Lopes, 
{\em Remarks on the bicanonical map for surfaces of general type}, 
Math. Z.  224 (1997), 137--166.

 \bibitem [Hi]{hi} F.~Hirzebruch, {\em 
Singularities of algebraic surfaces and characteristic numbers.}, 
Algebraic geometry, Proc. Lefschetz Centen. Conf., Mexico City/Mex. 1984, Part I, Contemp. Math. 58 (1986), 141---155.

\bibitem [KM]{k} K.~Konno, M.~Mendes Lopes
{\em The base components of the dualizing sheaf of a curve on a surface},  
Archiv der Mathematik   90 (2008), 395--400. 
\bibitem[L1]{langer1} A.~Langer, {\em The Bogomolov-Miyaoka-Yau inequality for log canonical surfaces},  J. Lond. Math. Soc., II. Ser. 64  (2) (2001), 327--343.
\bibitem[L2]{langer} A.~Langer, {\em  Logarithmic orbifold Euler numbers of surfaces with applications}, Proc. Lond. Math. Soc., III. Ser.
86 (2)  (2003),  358--396 .
\bibitem[Mi]{miyaoka} Y.~Miyaoka, {\em The maximal number of quotient singularities on surfaces with given numerical invariants}, Math. Ann.  268 (1984), 159--171. 

\bibitem[Me]{megyesi} G.~Megyesi, {\em Generalisation of the Bogomolov-Miyaoka-Yau inequality to singular surfaces}, Proc. Lond. Math. Soc., III. Ser.
 78 (2) (1999), 241--282.
\bibitem [M] {m} M.~Mendes Lopes, 
{\em Adjoint systems on surfaces}, 
Boll. Un. Mat. Ital. A (7)   10  (1996),  no. 1, 169--179.
   

\bibitem [Ra] {ra} C. ~P.~Ramanujam, {\em Remarks on the Kodaira vanishing theorem}
J. Indian Math. Soc.  36 (1972), 
41--51.

\bibitem [Re1] {r} M.~Reid, 
{\em Chapters on algebraic  surfaces}, Koll\'{a}r, J\'{a}nos (ed.), Complex algebraic geometry. Lectures of a summer program, Park City, UT, 1993. Providence, RI: American Mathematical Society. IAS/Park City Math. Ser. 3, 5-159 (1997)


\bibitem [Re2] {r2} M.~Reid, 
{\em  The Du Val singularities $A_n$,$D_n$,$E_6$,$E_7$,$E_8$}, preprint, //homepages.warwick.ac.uk/~masda/surf/more/DuVal.pdf
\bibitem[Sp]{spiva} M.~Spivakovsky, {\em Sandwiched Singularities and Desingularization of Surfaces by Normalized Nash Transformations}, 
Annals of Mathematics, Second Series, Vol. 131, No. 3 (1990),  411--491.	
\end{thebibliography}
\end{document}